\theoremstyle{definition} \newtheorem{defn}{Definition}       
\theoremstyle{plain} \newtheorem{prop}[defn]{Proposition}          
\theoremstyle{plain}                 
\theoremstyle{plain} \newtheorem{lem}[defn]{Lemma}                  
\theoremstyle{plain}              
\theoremstyle{remark} \newtheorem{rmk}[defn]{Remark}                
\theoremstyle{remark}                 
\def\namedlabel#1#2{\begingroup
    #2%
    \def\@currentlabel{#2}%
    \phantomsection\label{#1}\endgroup
}
\begin{document}

\title{\textbf{Distribution-robust mean estimation via smoothed random perturbations}}
\author{
  Matthew J.~Holland\thanks{Please direct correspondence to \texttt{matthew-h@ar.sanken.osaka-u.ac.jp}.}\\
  Osaka University
}
\date{} 

\maketitle

\begin{abstract}
We consider the problem of mean estimation assuming only finite variance. We study a new class of mean estimators constructed by integrating over random noise applied to a soft-truncated empirical mean estimator. For appropriate choices of noise, we show that this can be computed in closed form, and utilizing relative entropy inequalities, these estimators enjoy deviations with exponential tails controlled by the second moment of the underlying distribution. We consider both additive and multiplicative noise, and several noise distribution families in our analysis. Furthermore, we empirically investigate the sensitivity to the mean-standard deviation ratio for numerous concrete manifestations of the estimator class of interest. Our main take-away is that an inexpensive new estimator can achieve nearly sub-Gaussian performance for a wide variety of data distributions.
\end{abstract}

\tableofcontents

\section{Introduction}\label{sec:overview}

In this work, we consider the problem of mean estimation under very weak assumptions on the underlying distribution; all we assume is that the variance is finite. This problem is important because the practitioner often will not know \textit{a priori} whether or not the underlying data distribution is sub-Gaussian in nature or whether it is heavy-tailed with infinite higher-order moments \citep{devroye2016a}. Furthermore, procedures which provide strong statistical estimation guarantees in this setting have been shown to have many applications in modern machine learning tasks, where off-sample generalization performance is measured using a risk (expected loss) function to be estimated empirically as a core feedback mechanism, with more robust feedback leading to provably stronger learning guarantees \citep{brownlees2015a,chen2017a,holland2019c}.

Given a sample $X_{1},\ldots,X_{n}$ of $n$ independent random variables taking values in $\RR$ with common distribution $\ddist$, the traditional approach for estimation of the mean $\exx_{\ddist}X$ is to use the empirical mean $\xbar \defeq n^{-1} \sum_{i=1}^{n} X_{i}$, for which many optimality properties are well-known. For example, if the data is Normally distributed with variance $\sigma^{2}$, then confidence intervals for the deviations can be computed exactly, and with probability no less than $1-2\delta$, one has
\begin{align*}
|\xbar - \exx_{\ddist}X| \leq \frac{\sigma}{\sqrt{n}} \Phi^{-1}(1-\delta)
\end{align*}
where $\Phi$ denotes the Normal cumulative distribution function. Even without Normal assumptions, since we have finite variance, asymptotically the central limit theorem tells us that the same kinds of guarantees are possible, where as $n \to \infty$ we have
\begin{align*}
\prr\left\{ |\xbar - \exx_{\ddist}X| > \frac{\sigma}{\sqrt{n}} \Phi^{-1}(1-\delta/2) \right\} \to \delta.
\end{align*}
This deviation bound is a natural benchmark against which to compare other estimators, since in the Normal case, the empirical mean $\xbar$ is essentially optimal, in the following sense \citep{catoni2012a}. Let $\PP$ be a family of distributions including all Normal distributions with some finite variance $\sigma^{2}$. Take any estimator $\xhat$, and denote any valid one-sided deviation bounds by $\varepsilon_{\delta}(\xhat,\ddist)$, where
\begin{align*}
\prr\left\{ \xhat-\exx_{\ddist}X > \varepsilon_{\delta}(\xhat,\ddist) \right\} \leq \delta
\end{align*}
for $\ddist \in \PP$ and $\delta \in (0,1)$. Then, regardless of the construction of $\xhat$, for any confidence level $\delta$, there always exists an unlucky Normal distribution $\ddist_{\text{bad}} \in \PP$ with $\vaa_{\ddist_{\text{bad}}} X = \sigma^{2}$ such that
\begin{align*}
\varepsilon_{\delta}(\xhat,\ddist_{\text{bad}}) \geq \frac{\sigma^{2}}{\sqrt{n}} \Phi^{-1}(1-\delta).
\end{align*}
Analogous statements hold for the lower tail, meaning that the benchmark set by the empirical mean in the case of Normal data is essentially the best we can expect of any estimator, in terms of dependence on $n$ and $\delta$ and guarantees that hold uniformly over the model $\PP$.

With this natural benchmark in mind, \citet{devroye2016a} did an in-depth study of so-called \textit{sub-Gaussian estimators}, namely any estimator which satisfies
\begin{align}\label{eqn:subG_estimators}
\prr\left\{ |\xhat-\exx_{\ddist}X| > c \, \sigma_{\ddist} \sqrt{\frac{(1+\log(\delta^{-1}))}{n}} \right\}, \quad \ddist \in \PP
\end{align}
where $\PP$ is typically a large, non-parametric class of distributions, $\sigma_{\ddist}^{2} \defeq \vaa_{\ddist}X$ is the variance of $\ddist$, and $c>0$ is some distribution-free constant. Since $\Phi^{-1}(1-\delta/2) \leq \sqrt{2\log(2\delta^{-1})}$, this is a slight weakening of the benchmark given above, but fundamentally captures the same phenomena. One important fact is that assuming only finite variance, namely the model $\PP_{2} \defeq \{\ddist: \sigma_{\ddist}^{2} < \infty\}$, the empirical mean $\xbar$ is \textit{not} a sub-Gaussian estimator. As a salient example, \citet{catoni2012a} shows that one can always create a distribution $\ddist_{\text{bad}} \in \PP_{2}$ such that with probability \textit{at least} $\delta$, one has
\begin{align*}
|\xbar-\exx_{\ddist_{\text{bad}}}| \geq \frac{\sigma_{\ddist_{\text{bad}}}}{\sqrt{n\delta}} \left(1 - \frac{e\delta}{n}\right)^{(n-1)/2},
\end{align*}
namely a lower bound which says that the guarantees provided by Chebyshev's inequality, with polynomial, rather than logarithmic dependence on $1/\delta$, are essentially tight for the finite-variance model $\PP_{2}$.

Given that the empirical mean $\xbar$ is not distribution-robust in the sub-Gaussian sense just stated, it is natural to ask whether, under such weak assumptions, there exist sub-Gaussian estimators at all. The answer in this case is affirmative, although the construction of $\xhat$ depends on the desired confidence level $\delta$. This is indeed necessary, as \citet{devroye2016a} prove that for $\PP_{2}$ one cannot construct a sub-Gaussian estimator whose design is free of $\delta$. One of the most lucid examples is the median-of-means estimator \citep{lerasle2011a}, which partitions $\{1,\ldots,n\}$ into $k$ disjoint subsets, computes a sample mean on each $\xbar_{1},\ldots,\xbar_{k}$, and finally returns the median $\xhat_{\text{MOM}} = \med\{\xbar_{1},\ldots,\xbar_{k}\}$. With the right number of partitions (e.g., $k = \lceil\log(\delta^{-1})\rceil$), the estimator $\xhat_{\text{MOM}}$ is sub-Gaussian. Another lucid example is that of M-estimators with variance-controlled scaling \citep{catoni2012a}, taking the form
\begin{align*}
\xhat_{\text{M}} = \argmin_{\theta \in \RR} \sum_{i=1}^{n} f\left(\frac{\theta-X_{i}}{s_{\delta,\ddist}}\right)
\end{align*}
where $f$ is an appropriate convex function, and the parameter $s_{\delta,\ddist}^{2}$ scales as $O(n\sigma_{\ddist}^{2}/\log(\delta^{-1}))$. Furthermore, one of the main results of \citet{devroye2016a} is a novel estimator construction technique which is provably sub-Gaussian with nearly optimal constants assuming only finite variance, but their procedure is not computationally tractable.

\paragraph{Our contributions}
All known sub-Gaussian estimators require solving some sub-problem whose solution is implicitly defined; even the most practical choices such as the median-of-means and M-estimator approaches amount to minimizing a data-dependent convex function. In this work, we consider a new class of estimators which can be computed directly and precisely, without any iterative sub-routines. The cost for this is we give up sub-Gaussianity; we show that the estimators of interest satisfy $1-2\delta$ deviation bounds of the form
\begin{align*}
|\xhat-\exx_{\ddist}X| \leq c \sqrt{\frac{\exx_{\ddist}X^{2}(1+\log(\delta^{-1}))}{n}}, \quad \ddist \in \PP_{2}
\end{align*}
for an appropriate constant $c>0$ which only depends on $\xhat$ and $\PP_{2}$. The basic form is the same as the sub-Gaussian condition of \citet{devroye2016a}, but because the second moment controls the bound instead of the variance, it becomes sensitive to the absolute value of the mean $\exx_{\ddist}X$, though as we demonstrate shortly, a simple sample-splitting technique works well to mitigate this sensitivity both in theory and in practice. The estimators we construct are built by first considering the application of multiplicative or additive noise to a soft-truncated version of the empirical mean, and then smoothing out these effects by taking expectation with respect to these random perturbations, whose distribution we control. We consider adding noise from Bernoulli, Normal, Weibull, and Student-t families to provide some concrete examples of the estimator class of interest. In particular, the Bernoulli variety is computationally simplest and performs best in our experimental setting, offering a new alternative to the sub-Gaussian mean estimators cited earlier, with the appeal of no computational error and more transparent analysis.

The rest of the paper is structured as follows. In section \ref{sec:estimator} we introduce the estimator class of interest, and highlight some basic statistical and computational principles which will be of use for subsequent analysis. Our main theoretical results are organized in section \ref{sec:theory}, where we prove statistical error bounds and derive closed-form expressions for concrete examples from the class of interest. In section \ref{sec:empirical} we conduct a series of controlled experiments in which we analyze performance, evaluating in particular the sensitivity to the size of the mean relative to the standard deviation. A brief discussion and concluding remarks close the paper in section \ref{sec:conclusions}.

\section{Estimator class of interest}\label{sec:estimator}

Our aim is to study the behavior of a class of new estimators which use moment-dependent scaling, smoothed noise (both additive and multiplicative), and bounded soft truncation. Let $s>0$ denote a generic scaling parameter to be determined shortly, and let $\epsilon_{1},\ldots,\epsilon_{n}$ denote $n$ independent copies of a noise random variable $\epsilon \sim \ndist$. Both $s$ and $\ndist$ are assumed to be under our control. For simplicity, we focus on the following two main types of estimators:
\begin{enumerate}
\item \textbf{Multiplicative:} Assuming $\exx_{\ndist} \epsilon \neq 0$, construct estimator as
\begin{align}\label{eqn:xm_defn}
\xm \defeq \exx \left(\frac{s}{n} \sum_{i=1}^{n} \trunc\left(\frac{X_{i}\epsilon_{i}}{s}\right)\right).
\end{align}

\item \textbf{Additive:} Assuming $\exx_{\ndist} \epsilon = 0$, construct estimator as
\begin{align}\label{eqn:xa_defn} 
\xa \defeq \exx \left(\frac{s}{n} \sum_{i=1}^{n} \trunc\left(\frac{X_{i}+\epsilon_{i}}{s}\right)\right).
\end{align}
\end{enumerate}
First, regarding the soft truncation function $\trunc$, any differentiable, odd, non-decreasing function is plausible, but for concreteness we use the convenient sigmoid function of \citet{catoni2017a}, given in (\ref{eqn:trunc_defn}) and pictured in Figure \ref{fig:trunc_catgiu}.
\begin{align}\label{eqn:trunc_defn}
\trunc(u) =
\begin{cases}
u - u^{3}/6, & -\sqrt{2} \leq u \leq \sqrt{2}\\
2\sqrt{2}/3, & u > \sqrt{2}\\
-2\sqrt{2}/3, & u < -\sqrt{2}
\end{cases}
\end{align}
Second, the expectation is taken with respect to the product measure induced by the sample $\epsilon_{1},\ldots,\epsilon_{n}$. As we shall see in section \ref{sec:theory}, with proper choice of $\ndist$, using the convenient polynomial form of $\trunc$, we can often compute $\xm$ and $\xa$ directly.

\begin{figure}[t]
\centering
\includegraphics[width=0.5\textwidth]{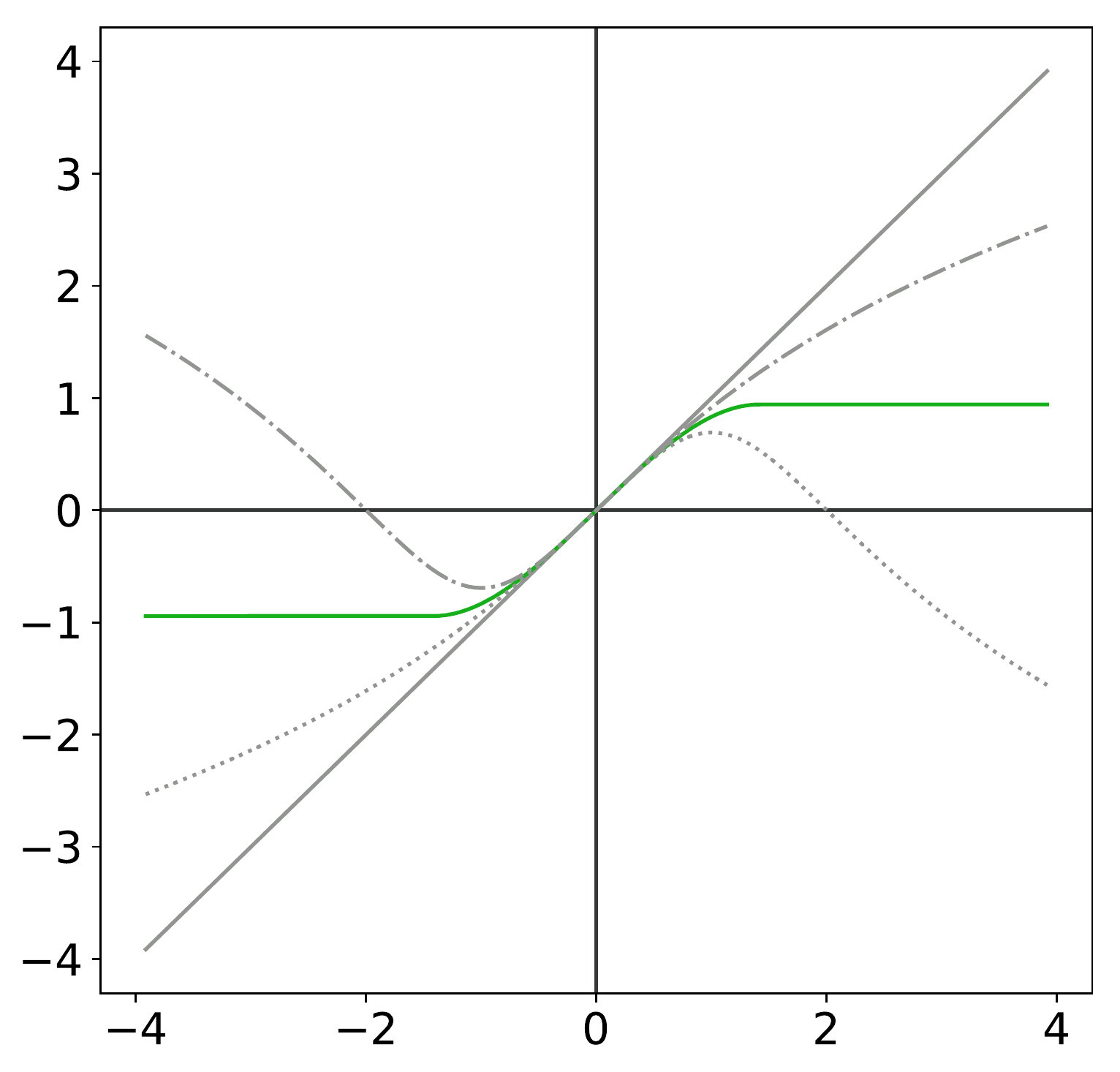}
\caption{Graph of $\trunc(u)$ (in green), along with upper and lower bounds given in (\ref{eqn:trunc_keyprop}).}
\label{fig:trunc_catgiu}
\end{figure}

\subsection{Computation of new estimators}

Here we consider some general principles which will aid us in computing the estimators $\xm$ and $\xa$ just introduced. To begin, note that the piecewise function $\trunc$ can be written explicitly using indicator functions as
\begin{align}\label{eqn:trunc_piecewise}
\trunc(u) = \left(u - \frac{u^{3}}{6}\right)\left( I\{u \leq \sqrt{2}\} - I\{u < -\sqrt{2}\} \right) + \frac{2\sqrt{2}}{3}\left(1 - I\{u \leq \sqrt{2}\} - I\{u < -\sqrt{2}\} \right).
\end{align}
Let $W$ denote an arbitrary random variable. We consider computation of $\exx\trunc(a+bW)$, where expectation is taken with respect to $W$, and $a \in \RR$ and $b > 0$ are respectively shift and scale parameters. To streamline implementation, for integer $k>0$ and input $u \in \RR$, we introduce the notation
\begin{align}
\label{eqn:comp_Mk}
M_{a,b}^{k}(u) & \defeq \exx W^{k} I\left\{ a+bW \leq u \right\},\\
\label{eqn:comp_Dk}
D_{a,b}^{k}(u) & \defeq M_{a,b}^{k}(u) - M_{a,b}^{k}(-u).
\end{align}
Some care is required with the value of $b$. For the case of $b=0$, it follows immediately that $\exx \trunc(a+bW) = \trunc(a)$. When $b \neq 0$, then we need to pay attention to the sign,\footnote{As an obvious example, say $W \sim \text{Normal}(0,1)$. Unless $u=a$, taking expectation over $(-\infty,(u-a)/b]$ and $[(u-a)/b,\infty)$ will respectively lead to different results.} computing as
\begin{align}
\label{eqn:comp_Mk_signs}
M_{a,b}^{k}(u) =
\begin{cases}
\displaystyle M_{0,1}^{k}\left( \frac{u-a}{b} \right), & \text{ if } b>0 \medskip\\
\displaystyle \exx W^{k} - M_{0,1}^{k}\left( \frac{u-a}{b} \right), & \text{ if } b<0.
\end{cases}
\end{align}
This equality follows from straightforward rearrangements. When $b>0$, note
\begin{align*}
I\{a+bW \leq u\} = I\{W \leq (u-a)/b\}
\end{align*}
for any choice of $u, a \in \RR$. When $b<0$, note that
\begin{align*}
I\{a+bW \leq u\} = I\{W \geq (u-a)/b\} = 1 - I\{W < (u-a)/b\}
\end{align*}
from which the second case in (\ref{eqn:comp_Mk_signs}) is obtained. Assuming then that evaluating $M_{0,1}^{k}(u)$ is tractable, obtaining $\exx\trunc(a+bW)$ can be reduced to direct computations as
\begin{align}
\label{eqn:comp_breakdown}
F_{W}(a,b) \defeq \exx \trunc\left(a+bW\right) = F_{0}(a,b) - F_{3}(a,b)
\end{align}
where we have
\begin{align*}
F_{0}(a,b) & \defeq \frac{2\sqrt{2}}{3} + M_{a,b}^{0}(\sqrt{2})\left(a-\frac{2\sqrt{2}}{3}-\frac{a^{3}}{6}\right) - M_{a,b}^{0}(-\sqrt{2})\left(a+\frac{2\sqrt{2}}{3}-\frac{a^{3}}{6}\right)\\
F_{3}(a,b) & \defeq \left(\frac{a^{2}}{2}-1\right)b D_{a,b}^{1}(\sqrt{2}) + \frac{ab^{2}}{2} D_{a,b}^{2}(\sqrt{2}) + \frac{b^{3}}{6} D_{a,b}^{3}(\sqrt{2}).
\end{align*}
The above follows from straightforward algebra, using the convenient form (\ref{eqn:trunc_piecewise}). In practice then, we will need to evaluate $M_{0,1}^{k}(\cdot)$ for degrees $k=0,1,2,3$. Since the actual computations depend completely on the distribution of $W$ here, detailed derivations for different distribution families will be given in section \ref{sec:theory}.

\subsection{Two general-purpose deviation bounds}

Recall the setting described above, in which we have iid data $X_{1},\ldots,X_{n}$ and iid ``strategic noise'' $\epsilon_{1},\ldots,\epsilon_{n}$, respectively distributed as $X \sim \ddist$ and $\epsilon \sim \ndist$, inspired by the approach of \citet{catoni2017a}, we may obtain convenient inequalities depending on the noise distribution $\ndist$, which is ``posterior'' in the sense that it may depend on the sample, and a ``prior'' distribution $\prior$, which must be specified in advance. The fundamental underlying property we use is illustrated in Lemma \ref{lem:cat17type_KLbound} below.\footnote{See \citet{holland2019pb} for an elementary proof.}

\begin{lem}\label{lem:cat17type_KLbound}
Fix an arbitrary prior distribution $\prior$ on $\RR$, and consider $f:\RR^{2} \to \RR$, assumed to be bounded and measurable. It follows that with probability no less than $1-\delta$ over the random draw of the sample, we have
\begin{align*}
\exx \left(\frac{1}{n} \sum_{i=1}^{n} f(X_{i},\epsilon_{i})\right) \leq \int \log\exx_{\ddist}\exp(f(x,\epsilon)) \, d\ndist(\epsilon) + \frac{\KL(\ndist;\prior) + \log(\delta^{-1})}{n},
\end{align*}
uniform in the choice of $\ndist$, where expectation on the left-hand side is over the noise sample.
\end{lem}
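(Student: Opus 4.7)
The plan is to follow the standard PAC-Bayesian template: first establish a unit-expectation identity under the prior $\prior$, convert it to a high-probability statement via Markov, and then apply the Donsker--Varadhan change-of-measure inequality to lift the bound uniformly over choices of the posterior $\ndist$.

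First, for each fixed $\epsilon \in \RR$, the iid structure of $X_{1},\ldots,X_{n}$ gives
\begin{align*}
\exx \exp\left(\sum_{i=1}^{n} f(X_{i},\epsilon) - n\log \exx_{\ddist}\exp(f(X,\epsilon))\right) = 1,
\end{align*}
since the numerator factorizes into a product of $n$ copies of $\exx_{\ddist}\exp(f(X,\epsilon))$. Writing this exponent as $g(X^{n},\epsilon)$, the boundedness of $f$ lets Fubini interchange the data expectation and the $\prior$-integration, producing $\exx \int e^{g(X^{n},\epsilon)} \, d\prior(\epsilon) = 1$. Markov's inequality then yields, with probability at least $1-\delta$ over the draw of the sample,
\begin{align*}
\int e^{g(X^{n},\epsilon)} \, d\prior(\epsilon) \leq \delta^{-1}.
\end{align*}

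On this high-probability event I would invoke the Donsker--Varadhan variational inequality: for any probability measure $\ndist$ and any measurable $h$ for which the relevant integrals exist,
\begin{align*}
\int h \, d\ndist \leq \log \int e^{h} \, d\prior + \KL(\ndist;\prior).
\end{align*}
Applying this with $h = g(X^{n},\cdot)$ produces $\int g(X^{n},\epsilon) \, d\ndist(\epsilon) \leq \log(\delta^{-1}) + \KL(\ndist;\prior)$ simultaneously for every $\ndist$. Expanding $g$, dividing through by $n$, and recognizing that
\begin{align*}
\int \frac{1}{n}\sum_{i=1}^{n} f(X_{i},\epsilon) \, d\ndist(\epsilon) = \exx\left(\frac{1}{n}\sum_{i=1}^{n} f(X_{i},\epsilon_{i})\right)
\end{align*}
because the $\epsilon_{i}$ are iid from $\ndist$, a rearrangement produces the stated bound.

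The only point that requires any care is the uniformity in $\ndist$ and the integrability needed for the change-of-measure step, but the boundedness hypothesis on $f$ makes both automatic: $g(X^{n},\cdot)$ is bounded as a function of $\epsilon$, so $\int e^{g} \, d\prior$ is finite and the variational inequality applies for every $\ndist$ with $\KL(\ndist;\prior) < \infty$, while the claim is trivially true when the KL term is infinite. Fubini in the unit-expectation step is justified by the same boundedness. Beyond this, the argument is essentially mechanical, and I do not anticipate further obstacles.
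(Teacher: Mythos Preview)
The paper does not actually present its own proof of this lemma; it merely points in a footnote to an external reference for ``an elementary proof.'' Your argument is correct and follows the standard PAC-Bayesian template---unit-expectation identity under the prior, Markov's inequality, then the Donsker--Varadhan change of measure to get uniformity in $\ndist$---which is almost certainly the route taken in the cited reference as well.
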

\noindent In the context of $\xm$ and $\xa$ defined in section \ref{sec:estimator}, we can obtain convenient upper bounds through special cases of Lemma \ref{lem:cat17type_KLbound}. The key property of the truncation function (\ref{eqn:trunc_defn}) is that over its entire domain, the following upper and lower bounds hold (see Figure \ref{fig:trunc_catgiu} for an illustration):
\begin{align}\label{eqn:trunc_keyprop}
-\log\left(1 - u + u^{2}/2\right) \leq \trunc(u) \leq \log\left(1 + u + u^{2}/2\right), \qquad u \in \RR.
\end{align}
As such, when we set $f(x,\epsilon) = \trunc(x\epsilon/s)$ for the case of $\xm$, and $f(x,\epsilon) = \trunc((x+\epsilon)/s)$ for the case of $\xa$, then using the bounds (\ref{eqn:trunc_keyprop}), it follows that each of these estimators enjoy the following upper bounds, each of which holds with probability at least $(1-\delta)$, uniform in the choice of noise distribution $\ndist$:
\begin{align}
\label{eqn:xm_KL_bound}
\frac{\xm}{s} & \leq \int \left( \frac{\epsilon \exx_{\ddist}X}{s} + \frac{\epsilon^{2}\exx_{\ddist}X^{2}}{2s^{2}} \right) \, d\ndist(\epsilon) + \frac{\KL(\ndist;\prior) + \log(\delta^{-1})}{n}\\
\label{eqn:xa_KL_bound}
\frac{\xa}{s} & \leq \int \left( \frac{\exx_{\ddist}X + \epsilon}{s} + \frac{\exx_{\ddist}(x+\epsilon)^{2}}{2s^{2}} \right) \, d\ndist(\epsilon) + \frac{\KL(\ndist;\prior) + \log(\delta^{-1})}{n}.
\end{align}
We emphasize that the randomness in the estimators $\xm$ and $\xa$ is exclusively due to the data sample, since we are integrating out any randomness due to the noise. The reason we restrict the definition of $\xm$ to the case of noise with non-zero mean is purely because we are interested in deviation bounds. For any noise distribution with $\exx_{\ndist}\epsilon = 0$ and $\exx_{\ndist}\epsilon^{2} = 1$, the first term on the right-hand side of (\ref{eqn:xm_KL_bound}) evaluates to
\begin{align*}
\int \left( \frac{\epsilon \exx_{\ddist}X}{s} + \frac{\epsilon^{2}\exx_{\ddist}X^{2}}{2s^{2}} \right) \, d\ndist(\epsilon) = \frac{\exx_{\ddist}X^{2}}{2s^{2}},
\end{align*}
meaning that the bounds on $\xhat$ in this case are always free of $\exx_{\ddist}X$, which spoils this approach for seeking bounds on $|\xm-\exx_{\ddist}X|$. On the other hand, the reason for restricting $\exx_{\ndist} \epsilon = 0$ in the definition of $\xa$ is to prevent an artefact in the upper bound on deviations that does not depend on $s$.

\section{Theoretical analysis}\label{sec:theory}

Here we consider a number of different noise distribution families for constructing the estimators (\ref{eqn:xm_defn}) and (\ref{eqn:xa_defn}) introduced in the previous section. For each distribution, we seek both statistical error guarantees, as well as explicit forms for efficiently computing the estimators.

\subsection{Bernoulli noise}\label{sec:bydistro_bernoulli}

Perhaps the simplest choice of noise distribution is that of randomly deleting observations with a fixed probability, namely the case of Bernoulli noise $\epsilon \in \{0,1\}$. The following result makes this concrete.

\begin{prop}[Deviation bounds and estimator computation]\label{prop:bernoulli_all}
Consider noise $\ndist = \text{Bernoulli}(\theta)$ for some $\theta \in (0,1)$, and prior $\prior = \text{Bernoulli}(1/2)$. The estimator $\xm$ in (\ref{eqn:xm_defn}) takes the form
\begin{align*}
\xm = \frac{\theta s}{n} \sum_{i=1}^{n} \trunc\left(\frac{X_{i}}{s}\right)
\end{align*}
and satisfies the following deviation bound,
\begin{align*}
|\xm/\theta - \exx_{\ddist}X| \leq \frac{\exx_{\ddist}X^{2}}{2s} + \frac{s}{n\theta}\left( \theta\log 2\theta + (1-\theta)\log 2(1-\theta) + \log(\delta^{-1}) \right)
\end{align*}
with probability no less than $1-2\delta$.
\end{prop}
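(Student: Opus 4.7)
The proof splits into a structural claim (the closed form for $\xm$) and a two-sided deviation bound, so I would handle them separately.

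For the closed form, I would simply unpack the definition (\ref{eqn:xm_defn}) and use linearity of expectation together with independence of the $\epsilon_i$. With $\epsilon_i \sim \text{Bernoulli}(\theta)$, the random variable $\trunc(X_i \epsilon_i / s)$ takes the value $\trunc(X_i/s)$ when $\epsilon_i = 1$ and $\trunc(0) = 0$ when $\epsilon_i = 0$ (since $\trunc$ is odd and in particular vanishes at $0$), so taking $\exx$ over the noise gives $\theta \trunc(X_i/s)$. Summing yields the stated expression. This step is routine.

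For the upper tail, I would instantiate inequality (\ref{eqn:xm_KL_bound}), already supplied by the general framework, with $\ndist = \text{Bernoulli}(\theta)$ and $\prior = \text{Bernoulli}(1/2)$. The integral on the right collapses because $\exx_{\ndist}\epsilon = \exx_{\ndist}\epsilon^2 = \theta$, leaving $\theta \exx_{\ddist}X / s + \theta \exx_{\ddist}X^2 / (2s^2)$. A direct computation gives $\KL(\text{Bernoulli}(\theta);\text{Bernoulli}(1/2)) = \theta\log 2\theta + (1-\theta)\log 2(1-\theta)$. Rearranging and multiplying through by $s/\theta$ produces the one-sided bound $\xm/\theta - \exx_{\ddist}X \leq \exx_{\ddist}X^2/(2s) + (s/(n\theta))(\KL + \log \delta^{-1})$ with probability at least $1-\delta$.

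For the lower tail, the key observation is that (\ref{eqn:xm_KL_bound}) was derived from Lemma \ref{lem:cat17type_KLbound} applied to $f(x,\epsilon) = \trunc(x\epsilon/s)$ together with the upper bound $\trunc(u) \leq \log(1+u+u^2/2)$. I would mirror this argument with $f(x,\epsilon) = -\trunc(x\epsilon/s)$, invoking the lower bound $-\trunc(u) \leq \log(1 - u + u^2/2)$ from (\ref{eqn:trunc_keyprop}) in conjunction with $\log(1+x) \leq x$. The structure is identical, the sign of the linear term in $\epsilon$ flips, and after specializing to Bernoulli noise one obtains $\exx_{\ddist}X - \xm/\theta \leq \exx_{\ddist}X^2/(2s) + (s/(n\theta))(\KL + \log \delta^{-1})$ with probability at least $1-\delta$. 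A union bound over the two tail events then yields the two-sided bound at confidence $1-2\delta$.

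The only step with any subtlety is the lower-tail derivation, since the paper only writes out (\ref{eqn:xm_KL_bound}) explicitly; one must verify that applying Lemma \ref{lem:cat17type_KLbound} to $-\trunc$ does give a genuinely symmetric bound (in particular that $\exx_{\ddist}(1 - X\epsilon/s + X^2\epsilon^2/(2s^2))$ is nonnegative wherever we take logarithms). Everything else is bookkeeping: expectation of a Bernoulli, the closed-form KL, and multiplying by $s/\theta$ to convert the inequality on $\xm/s$ into an inequality on $\xm/\theta - \exx_{\ddist}X$.
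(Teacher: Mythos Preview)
Your proposal is correct and follows essentially the same line as the paper's proof: compute the closed form from $\exx_{\ndist}\trunc(X_i\epsilon_i/s)=\theta\trunc(X_i/s)$, instantiate (\ref{eqn:xm_KL_bound}) with the Bernoulli moments and KL, and take a union bound with a mirrored lower-tail argument. The only cosmetic difference is in that mirroring: you apply Lemma~\ref{lem:cat17type_KLbound} to $f=-\trunc(x\epsilon/s)$ with the same $\{0,1\}$-valued noise, whereas the paper keeps $f=\trunc(x\epsilon/s)$ but relocates $\ndist$ and $\prior$ to the domain $\{-1,0\}$ with $\ndist\{-1\}=\theta$; by oddness of $\trunc$ these are equivalent, and your variant is in fact the one the paper itself uses later in the Normal-noise proof.
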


\begin{rmk}[Centered estimates]
To reduce the dependence of the above estimator on the second moments of the underlying distribution, use of an ancillary mean estimator to approximately center the data is a useful strategy. For example, from the full sample of $X_{1},\ldots,X_{n}$, let the first $m < n$ observations be used to construct such an ancillary estimator, denoted
\begin{align*}
\xbar_{\trunc} = \frac{s}{m} \sum_{i=1}^{m} \trunc\left(\frac{X_{i}}{s}\right)
\end{align*}
where $s^{2} = m \exx_{\ddist}X^{2} / 2\log(\delta^{-1})$. Then shift the remaining data points from $X_{i} \mapsto x^{\prime}_{i}$ as $x^{\prime}_{i} \defeq X_{i} - \xbar_{\trunc}$, for each $i=m+1,\ldots,n$. Writing the upper bound in Proposition \ref{prop:bernoulli_all} depending on the full $n$-sized sample as $\varepsilon_{n}$, the second moment of the shifted data can be readily bounded above by $\exx_{\ddist}(x^{\prime})^{2} \leq \vaa_{\ddist}x + \varepsilon^{2}_{m}$. Using this upper bound to scale $\xm$, this time applied to the centered dataset $\{x_{m+1}^{\prime},\ldots,X_{n}^{\prime}\}$ of size $n-m$, write $\xhat^{\prime}$ for the resulting estimator. Shifting this back into the original position, we have our final output, namely $\xhat = \xhat^{\prime} + \xbar_{\trunc}$, which enjoys variance-dependent deviation tails of the form
\begin{align*}
\prr\{|\xhat - \exx_{\ddist}X| > \epsilon \} \leq 4\exp\left(\frac{-(n-m)\epsilon^{2}}{2(\vaa_{\ddist}x + \varepsilon^{2}_{m})}\right).
\end{align*}
\end{rmk}

\subsection{Normal noise}\label{sec:bydistro_normal}

\begin{prop}[Deviation bounds]\label{prop:normal_devbd}
Additive case: consider noise $\ndist = \text{Normal}(0,\beta^{-1})$ and prior $\prior = \text{Normal}(1,\beta^{-1})$, setting $\beta^{2} = n/s^{2}$ and $s^{2} = n\exx_{\ddist}X^{2}/(2\log(\delta^{-1}))$. Then, we have
\begin{align*}
|\xa - \exx_{\ddist}X| \leq \sqrt{\frac{2\exx_{\ddist}X^{2}\log(\delta^{-1})}{n}} + \frac{1}{\sqrt{n}}
\end{align*}
with probability no less than $1-2\delta$ over the draw of the sample.

Multiplicative case: consider noise $\ndist = \text{Normal}(1,\beta^{-1})$ and prior $\prior = \text{Normal}(2,\beta^{-1})$, setting $\beta^{2} = n \exx_{\ddist}X^{2} / s^{2}$ and $s^{2} = n \exx_{\ddist}X^{2} / 2\log(\delta^{-1})$. Then, we have
\begin{align*}
|\xm - \exx_{\ddist}X| \leq \sqrt{\frac{2\exx_{\ddist}X^{2}\log(\delta^{-1})}{n}} + \sqrt{\frac{\exx_{\ddist}X^{2}}{n}}
\end{align*}
with probability no less than $1-2\delta$ over the draw of the sample.
\end{prop}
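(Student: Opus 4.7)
The plan is to specialize the two KL-type inequalities~(\ref{eqn:xm_KL_bound}) and~(\ref{eqn:xa_KL_bound}) to the prescribed Normal posterior--prior pairs, compute everything in closed form using the elementary moments of Normals and the Normal--Normal KL divergence, then derive matching lower tails by symmetry. The two ingredients the Normal choice supplies are: (i) for two Normals with common variance $\beta^{-1}$ and means $\mu_{\ndist},\mu_{\prior}$, the divergence is $(\mu_{\prior}-\mu_{\ndist})^{2}\beta/2$, equal to $\beta/2$ in both cases here; and (ii) $\int\epsilon\,d\ndist(\epsilon)$ and $\int\epsilon^{2}\,d\ndist(\epsilon)$ are $0,\beta^{-1}$ in the additive case and $1,1+\beta^{-1}$ in the multiplicative case.

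Substituting these into~(\ref{eqn:xa_KL_bound}) and~(\ref{eqn:xm_KL_bound}) and rearranging yields upper bounds of the shape
\begin{align*}
\xa - \exx_{\ddist}X &\leq \frac{\exx_{\ddist}X^{2}}{2s} + \frac{1}{2s\beta} + \frac{s\beta}{2n} + \frac{s\log(\delta^{-1})}{n},\\
\xm - \exx_{\ddist}X &\leq \frac{\exx_{\ddist}X^{2}}{2s} + \frac{\beta^{-1}\exx_{\ddist}X^{2}}{2s} + \frac{s\beta}{2n} + \frac{s\log(\delta^{-1})}{n}.
\end{align*}
The prescribed setting $\beta = \sqrt{n}/s$ (respectively $\beta = \sqrt{n\exx_{\ddist}X^{2}}/s$) is tailored so that the noise-variance term and the KL term are equal, their sum collapsing to $1/\sqrt{n}$ (respectively $\sqrt{\exx_{\ddist}X^{2}/n}$). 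The surviving two terms then take the common form $\exx_{\ddist}X^{2}/(2s) + s\log(\delta^{-1})/n$, which is equalized by $s^{2} = n\exx_{\ddist}X^{2}/(2\log(\delta^{-1}))$, contributing the advertised $\sqrt{2\exx_{\ddist}X^{2}\log(\delta^{-1})/n}$ term in both cases.

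For the matching lower tails I would re-run Lemma~\ref{lem:cat17type_KLbound} with $-f$ in place of $f$, invoking the left inequality of~(\ref{eqn:trunc_keyprop}) in the exponentiated form $\exp(-\trunc(u)) \leq 1 - u + u^{2}/2$. Taking $\ddist$-expectation and then applying $\log(1+y)\leq y$ inside the integral flips the sign of the $\exx_{\ddist}X$ contribution while preserving both the second-moment and KL penalty structure, producing a mirror-image inequality valid with probability $1-\delta$; a union bound yields the two-sided statement at confidence $1-2\delta$.

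The only real obstacle is orchestrating the joint choice of $\beta$ and $s$: the noise variance $\beta^{-1}$ and the KL penalty $\beta/2$ must be traded off so that their combined contribution scales as $n^{-1/2}$ rather than dominating, which is precisely what the couplings $\beta^{2} = n/s^{2}$ and $\beta^{2} = n\exx_{\ddist}X^{2}/s^{2}$ achieve; once the four contributions in the display above are cleanly separated, the remaining optimization in $s$ is the textbook minimization of $A/s + Bs$.
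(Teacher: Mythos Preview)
Your proposal is correct and follows essentially the same route as the paper: specialize~(\ref{eqn:xa_KL_bound}) and~(\ref{eqn:xm_KL_bound}) using the Normal moments and the Normal--Normal divergence $\KL(\ndist;\prior)=\beta/2$, optimize first in $\beta$ (which amounts to your equalization of the noise-variance and KL terms) and then in $s$, and obtain the lower tail by applying Lemma~\ref{lem:cat17type_KLbound} to $-f$ via the left inequality in~(\ref{eqn:trunc_keyprop}), finishing with a union bound. The only nuance is that the paper remarks (deferring to \citet{holland2019a}) that in the multiplicative case the two-sided bound can actually be obtained on a single $1-\delta$ event without a union, but since the proposition as stated claims only $1-2\delta$, your union-bound argument is perfectly adequate.
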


\begin{rmk}[Comparison of additive and multiplicative noise]
The key difference in terms of the performance guarantees available for $\xm$ and $\xa$ under the Normal noise setting is that when $\exx_{\ddist}X^{2} > 1$, the additive case has tighter bounds, and when $\exx_{\ddist}X^{2} < 1$, the multiplicative case has tighter bounds. A much smaller technical difference is that we have $1-\delta$ confidence intervals for the multiplicative case, but $1-\delta$ intervals for the additive case.
\end{rmk}

Next, let us consider computation of the estimators under Normal noise. 

\begin{prop}[Estimator computation, \citep{catoni2017a}]\label{prop:normal_comp}
For $W \sim \text{Normal}(0,1)$, the key quantities are computed as
\begin{align*}
M_{0,1}^{0}(u) & = \Phi(u)\\
M_{0,1}^{1}(u) & = \frac{-1}{\sqrt{2\pi}}\exp\left(\frac{-u^{2}}{2}\right)\\
M_{0,1}^{2}(u) & = M_{0,1}^{0}(u) + u M_{0,1}^{1}(u)\\
M_{0,1}^{3}(u) & = (u^{2}+2) M_{0,1}^{1}(u)
\end{align*}
where $\Phi$ denotes the standard Normal CDF.
\end{prop}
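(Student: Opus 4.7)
The plan is to compute each $M_{0,1}^{k}(u) = \int_{-\infty}^{u} w^{k}\phi(w)\,dw$ directly, where $\phi$ denotes the standard Normal density, by repeatedly exploiting the basic identity $\phi'(w) = -w\phi(w)$. The $k=0$ case is just the definition of $\Phi$. For $k=1$, I recognize the integrand $w\phi(w)\,dw$ as $-d\phi(w)$, so the integral collapses to the boundary term $-\phi(u)$ (the contribution at $-\infty$ vanishes since $\phi$ decays super-polynomially). This yields the stated expression for $M_{0,1}^{1}(u)$.

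For $k=2$ and $k=3$, I would integrate by parts with the factor $w\phi(w)\,dw = -d\phi(w)$ absorbed into the differential. In the $k=2$ case, writing $w^{2}\phi(w)\,dw = -w\,d\phi(w)$ and integrating by parts gives
\begin{align*}
\int_{-\infty}^{u} w^{2}\phi(w)\,dw = -u\phi(u) + \int_{-\infty}^{u}\phi(w)\,dw = \Phi(u) + uM_{0,1}^{1}(u),
\end{align*}
using $M_{0,1}^{1}(u) = -\phi(u)$ to identify the second term. For $k=3$, writing $w^{3}\phi(w)\,dw = -w^{2}\,d\phi(w)$ and integrating by parts gives $-u^{2}\phi(u) + 2\int_{-\infty}^{u} w\phi(w)\,dw = -(u^{2}+2)\phi(u)$, which is exactly $(u^{2}+2)M_{0,1}^{1}(u)$.

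There is no real obstacle here; this is essentially a bookkeeping exercise leveraging the self-similar structure of Gaussian moments under integration by parts. The only minor care required is to verify that the boundary terms at $-\infty$ vanish for each $k\in\{1,2,3\}$, which follows from $w^{k}\phi(w) \to 0$ as $w \to -\infty$ for any polynomial factor. Having established the four formulas, the result is complete, and these expressions can then be plugged into the decomposition (\ref{eqn:comp_breakdown}) together with the sign-correction (\ref{eqn:comp_Mk_signs}) to evaluate $\exx \trunc(a+bW)$ in closed form for Normal noise.
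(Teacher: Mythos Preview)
Your argument is correct: the identity $\phi'(w)=-w\phi(w)$ together with integration by parts yields each formula exactly as you describe, and the boundary checks at $-\infty$ are handled properly. Note that the paper does not actually include its own proof of this proposition --- it is attributed to \citet{catoni2017a} and stated without proof --- so there is nothing to compare against; your derivation is the standard one and would serve as a complete self-contained proof.
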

\noindent With Proposition \ref{prop:normal_comp} in hand, computation is very straightforward using the general form
\begin{align*}
\xhat & = \frac{s}{n} \sum_{i=1}^{n} F_{W}(a_{i},b_{i}),
\end{align*}
recalling the definition of $F_{W}$ in (\ref{eqn:comp_breakdown}). To implement the special case for which the bounds of Proposition \ref{prop:normal_devbd} hold, this amounts to
\begin{align*}
\xm & \text{ case: } \enspace a_{i}=\frac{X_{i}}{s}, \enspace b_{i} = \frac{|X_{i}|}{s\sqrt{\beta}}\\
\xa & \text{ case: } \enspace a_{i}=\frac{X_{i}}{s}, \enspace b_{i} = \frac{1}{s\sqrt{\beta}}.
\end{align*}

\begin{rmk}[Convenient computation]
For the case of $\xm$, note that using the value of $b_{i} = |X_{i}| / s\sqrt{\beta}$ rather than the signed $X_{i} / s\sqrt{\beta}$ is computationally convenient because then we only need to consider the positive case in evaluating (\ref{eqn:comp_Mk_signs}). Of course, the quantities being computed in either case are equivalent. To see this, just note that since $\ndist = \text{Normal}(1,\beta^{-1})$ and $W \sim \text{Normal}(0,1)$ we have conditioned on any $X_{i} \in \RR$,
\begin{align*}
\frac{X_{i}}{s} \epsilon = \frac{X_{i}}{s}\left(1+\text{Normal}(0,\beta^{-1})\right) = \frac{X_{i}}{s} + \frac{X_{i}}{s\sqrt{\beta}} W = \frac{X_{i}}{s} + \frac{|X_{i}|}{s\sqrt{\beta}} W
\end{align*}
where equality here means equality in distribution. The validity of this statement follows from the symmetry of the Normal distribution, since both $X_{i} W$ and $|X_{i}| W$ have the same distribution, namely $\text{Normal}(0,X_{i}^{2})$.
\end{rmk}

\subsection{Weibull noise}\label{sec:bydistro_weibull}

$W \sim \text{Weibull}(k,\sigma)$ with shape $k > 0$ and scale $\sigma > 0$ is defined by
\begin{align*}
\prr\left\{ W \leq \alpha \right\} = 1 - \exp\left(-\left(\frac{\alpha}{\sigma}\right)^{k}\right), \qquad \alpha \geq 0.
\end{align*}
The corresponding density function is
\begin{align*}
p(u) = \frac{k}{\sigma}\left(\frac{u}{\sigma}\right)^{k-1}\exp\left(-\left(\frac{u}{\sigma}\right)^{k}\right).
\end{align*}
%
The relative entropy can be computed in a straightforward manner, as is shown in a technical note by \citet{bauckhage2013a}. The general form for the relative entropy between $W_{1} \sim \text{Weibull}(k_{1},\sigma_{1})$ and $W_{2} \sim \text{Weibull}(k_{2},\sigma_{2})$ is
\begin{align}\label{eqn:weibull_KL}
\KL(W_{1};W_{2}) = \log\frac{k_{1}}{\sigma_{1}^{k_{1}}} - \log\frac{k_{2}}{\sigma_{2}^{k_{2}}} + \left(k_{1}-k_{2}\right)\left(\log\sigma_{1} - \frac{\gamma}{k_{1}}\right) + \left(\frac{\sigma_{1}}{\sigma_{2}}\right)^{k_{2}}\Gamma\left(\frac{k_{2}}{k_{1}}+1\right) - 1,
\end{align}
where $\gamma = 0.56621566\ldots$ is the Euler-Mascheroni constant.

\begin{prop}[Estimator computation]\label{prop:weibull_comp}
Let $W \sim \text{Weibull}(k,\sigma)$. Then following our notation in (\ref{eqn:comp_Mk})--(\ref{eqn:comp_Mk_signs}) and (\ref{eqn:comp_breakdown}), we have
\begin{align*}
M_{0,1}^{0}(u) & =
\begin{cases}
\displaystyle 0 & u \leq 0 \medskip\\
\displaystyle 1 - \exp\left( -\left(u/\sigma\right)^{k} \right) & u > 0
\end{cases}\\
M_{0,1}^{l}(u) & =
\begin{cases}
\displaystyle 0 & u \leq 0 \medskip\\
\displaystyle \frac{l\sigma^{l}}{k} \Gamma\left(l/k;\left(u/\sigma\right)^{k}\right) - u^{l} \exp\left(-\left(u/\sigma\right)^{k}\right) & u > 0
\end{cases}
\end{align*}
for $l=1,2,3$, where $\Gamma(u;v)=\int_{0}^{v} e^{-t} t^{u-1} \, dt$ is the unnormalized incomplete Gamma function.\footnote{Popular numerical computation libraries almost always include efficient implementations of the incomplete gamma function. For example, \texttt{gsl\_sf\_gamma\_inc} in the GNU Scientific Library, and \texttt{special.gammainc} in SciPy. See \citet{abramowitz1964a} for more background.}
\end{prop}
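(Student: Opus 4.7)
The plan is to handle the cases $u \le 0$ and $u > 0$ separately, reduce the positive case to an incomplete-gamma integral via a direct change of variables, and then use a single integration by parts to get the form stated.

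For the first case, note that $W \sim \text{Weibull}(k,\sigma)$ is supported on $(0,\infty)$, so whenever $u \le 0$ the indicator $I\{W \le u\}$ vanishes almost surely, giving $M_{0,1}^{l}(u) = 0$ for every $l \ge 0$. The $l=0$, $u>0$ entry is then immediate from the definition of the Weibull CDF recalled just before the proposition.

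For the main case $l \in \{1,2,3\}$ and $u > 0$, I would start from
\begin{align*}
M_{0,1}^{l}(u) = \int_{0}^{u} t^{l}\,\frac{k}{\sigma}\left(\frac{t}{\sigma}\right)^{k-1} \exp\!\left(-\left(\frac{t}{\sigma}\right)^{k}\right) dt,
\end{align*}
and perform the substitution $v = (t/\sigma)^{k}$, so that $dv = (k/\sigma)(t/\sigma)^{k-1}\,dt$ and $t^{l} = \sigma^{l} v^{l/k}$. The limits become $0$ and $(u/\sigma)^{k}$, producing
\begin{align*}
M_{0,1}^{l}(u) = \sigma^{l} \int_{0}^{(u/\sigma)^{k}} v^{l/k}\, e^{-v}\, dv = \sigma^{l}\, \Gamma\!\left(\tfrac{l}{k}+1;\,(u/\sigma)^{k}\right).
\end{align*}
Next I would apply integration by parts with $U = v^{l/k}$ and $dV = e^{-v}\,dv$ in the resulting incomplete-gamma integral, which yields the standard reduction
$\Gamma(l/k+1;a) = (l/k)\,\Gamma(l/k;a) - a^{l/k} e^{-a}$.
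Evaluating at $a = (u/\sigma)^{k}$ and using $\sigma^{l} a^{l/k} = u^{l}$, the expression matches the claim.

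No step is genuinely hard; the only subtlety worth checking is the boundary term at $v = 0$ in the integration by parts, which vanishes because $l \ge 1$ forces $v^{l/k} \to 0$ as $v \to 0^{+}$ (regardless of whether $l/k < 1$). The remainder is routine algebra, and the footnote justifies leaving the answer in terms of $\Gamma(\cdot;\cdot)$ since reliable numerical implementations are readily available.
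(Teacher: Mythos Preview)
Your proof is correct and uses the same two ingredients as the paper---integration by parts and the change of variables $v=(t/\sigma)^{k}$---but in the opposite order. The paper integrates by parts first, taking $U=t^{l}$ and $dV=p(t)\,dt$ so that the survival function appears as $V$, and then substitutes in the leftover integral $\int_{0}^{u} t^{l-1}\exp(-(t/\sigma)^{k})\,dt$; it also carries out the three cases $l=1,2,3$ one at a time. Your version substitutes first to land directly on $\sigma^{l}\,\Gamma(l/k+1;(u/\sigma)^{k})$ and then invokes the standard incomplete-gamma recurrence, which handles all $l$ uniformly in a single stroke. The net effect is the same formula, but your organization is a bit tidier and makes the connection to the gamma recurrence explicit rather than rederiving it case by case.
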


Thanks to Proposition \ref{prop:weibull_comp}, we know that we can compute $\xm$ and $\xa$ under Weibull noise. Now we look at the statistical guarantees that are available for such an estimation procedure.

\begin{prop}[Deviation bounds]\label{prop:weibull_devbd}
Additive case: consider noise $\ndist = \text{Weibull}(2,\sigma)-\sigma\sqrt{\pi}/2$ with prior $\prior = \ndist$, setting $s^{2} = n(\exx_{\ddist}X^{2}+\sigma^{2}(1-\pi/4)) / 2\log(\delta^{-1})$. Then we have
\begin{align*}
|\xa - \exx_{\ddist}X| \leq \sqrt{\frac{2(\exx_{\ddist}X^{2}+\sigma^{2}(1-\pi/4))\log(\delta^{-1})}{n}}
\end{align*}
with probability no less than $1-2\delta$ over the draw of the sample.

Multiplicative case: consider noise $\ndist = \text{Weibull}(k,\sigma(k))$ with prior $\prior = \text{Weibull}(k,1)$, where $\sigma(k) = (\Gamma(1+1/k))^{-1}$. To keep the notation clean, we write
\begin{align*}
c_{k} = \frac{1}{\Gamma^{k}(1+1/k)} + k\log\Gamma(1+1/k) - 1
\end{align*}
and have that setting
\begin{align*}
s^{2} = \frac{n \Gamma(1+2/k)\exx_{\ddist}X^{2}}{2\Gamma^{2}(1+1/k)(c_{k}+\log(\delta^{-1}))}
\end{align*}
it follows that
\begin{align*}
|\xm - \exx_{\ddist}X| \leq \sqrt{\frac{2\Gamma(1+2/k)\exx_{\ddist}X^{2}(c_{k}+\log(\delta^{-1}))}{\Gamma^{2}(1+1/k)n}}
\end{align*}
with probability no less than $1-2\delta$ over the draw of the sample.
\end{prop}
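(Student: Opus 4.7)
The strategy is to specialize the general KL-type deviation bounds (\ref{eqn:xm_KL_bound}) and (\ref{eqn:xa_KL_bound}) to the Weibull choices of $\ndist$ and $\prior$, evaluate the resulting moments and relative entropies explicitly, and then optimize the scaling parameter $s$ to balance the variance and confidence terms. In each of the additive and multiplicative cases I will obtain a one-sided upper bound with probability at least $1-\delta$; the matching lower bound follows by applying Lemma \ref{lem:cat17type_KLbound} to $-\trunc$ (using the left-hand inequality in (\ref{eqn:trunc_keyprop}) in place of the right-hand one), so a union bound gives the two-sided $1-2\delta$ statement.

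For the additive case I set $\prior = \ndist$ so that $\KL(\ndist;\prior)=0$ and the right-hand side of (\ref{eqn:xa_KL_bound}) reduces to $\exx_{\ddist}X/s + (\exx_{\ddist}X^2 + \exx_{\ndist}\epsilon^{2})/(2s^2) + \log(\delta^{-1})/n$, since $\exx_{\ndist}\epsilon = 0$ by construction. The noise $\ndist$ is a $\text{Weibull}(2,\sigma)$ shifted by its mean $\sigma\sqrt{\pi}/2$, so $\exx_{\ndist}\epsilon^2$ is precisely the variance of a $\text{Weibull}(2,\sigma)$ variate, which is the familiar $\sigma^{2}(1-\pi/4)$. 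Rearranging gives
\begin{align*}
\xa - \exx_{\ddist}X \leq \frac{\exx_{\ddist}X^2 + \sigma^2(1-\pi/4)}{2s} + \frac{s\log(\delta^{-1})}{n},
\end{align*}
and the stated choice of $s^2$ is exactly the minimizer of the right-hand side; both terms then equal $\tfrac12\sqrt{2(\exx_{\ddist}X^{2}+\sigma^{2}(1-\pi/4))\log(\delta^{-1})/n}$, summing to the claimed bound.

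For the multiplicative case I pick $\sigma(k) = 1/\Gamma(1+1/k)$ precisely so that $\exx_{\ndist}\epsilon = \sigma(k)\Gamma(1+1/k) = 1$, and the second moment is $\exx_{\ndist}\epsilon^2 = \sigma(k)^{2}\Gamma(1+2/k) = \Gamma(1+2/k)/\Gamma^{2}(1+1/k)$. Plugging into (\ref{eqn:xm_KL_bound}) yields
\begin{align*}
\xm - \exx_{\ddist}X \leq \frac{\Gamma(1+2/k)\exx_{\ddist}X^{2}}{2s\,\Gamma^{2}(1+1/k)} + \frac{s(\KL(\ndist;\prior)+\log(\delta^{-1}))}{n}.
\end{align*}
The main computation is then the Weibull KL divergence: specializing (\ref{eqn:weibull_KL}) with $k_{1}=k_{2}=k$, $\sigma_{1}=\sigma(k)$, $\sigma_{2}=1$ collapses all terms involving $\gamma$ and the shape ratio, leaving $\KL(\ndist;\prior) = -k\log\sigma(k) + \sigma(k)^{k} - 1 = k\log\Gamma(1+1/k) + 1/\Gamma^{k}(1+1/k) - 1 = c_{k}$. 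The remaining step is again an elementary optimization: setting $s$ so that the two terms balance gives the stated $s^{2}$, and their common value is half of the claimed final bound.

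The only delicate step is the KL evaluation; once one checks that the formula of \citet{bauckhage2013a} indeed reduces to the compact $c_{k}$ after the substitutions, everything else is moment arithmetic and an AM-GM style optimization over $s$. The symmetry argument for the lower tail is routine since the bounds in (\ref{eqn:trunc_keyprop}) and the moment quantities entering (\ref{eqn:xm_KL_bound})--(\ref{eqn:xa_KL_bound}) are invariant under replacing the integrand by its negation.
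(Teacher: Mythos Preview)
Your proposal is correct and follows essentially the same approach as the paper: specialize the general bounds (\ref{eqn:xm_KL_bound})--(\ref{eqn:xa_KL_bound}), insert the Weibull moments (with $\exx_{\ndist}\epsilon^{2}=\sigma^{2}(1-\pi/4)$ in the additive case and $\exx_{\ndist}\epsilon^{2}=\Gamma(1+2/k)/\Gamma^{2}(1+1/k)$ in the multiplicative case), evaluate $\KL(\ndist;\prior)$ (zero in the additive case, $c_{k}$ from (\ref{eqn:weibull_KL}) in the multiplicative case), and optimize over $s$. Your reduction of (\ref{eqn:weibull_KL}) to $c_{k}$ and your treatment of the lower tail via $-\trunc$ and a union bound match the paper's argument exactly.
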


Using Propositions \ref{prop:weibull_comp}--\ref{prop:weibull_devbd} and equation (\ref{eqn:comp_breakdown}), computation is done using the general form
\begin{align*}
\xhat = \frac{s}{n} \sum_{i=1}^{n} F_{W}(a_{i},b_{i})
\end{align*}
which specializes to
\begin{align*}
\xm & \text{ case: } \enspace W \sim \text{Weibull}(k,\sigma), \enspace \sigma = \frac{1}{\Gamma(1+1/k)}, \enspace a_{i}=0, \enspace b_{i} = \frac{X_{i}}{s}\\
\xa & \text{ case: } \enspace W \sim \text{Weibull}(2,\sigma), \enspace a_{i}=\frac{X_{i}-\sigma\sqrt{\pi}/2}{s}, \enspace b_{i} = \frac{1}{s}.
\end{align*}
Note that unlike our implementation in the previous section \ref{sec:bydistro_normal}, here $b_{i}$ can be both positive and negative. The need for this arises naturally as the Weibull distribution is asymmetric, and thus conditioned on $X_{i}$, we cannot in general say that $X_{i}\,W$ and $|X_{i}|W$ have the same distribution. As such, both cases in (\ref{eqn:comp_Mk_signs}) will be utilized for computations in the Weibull case.

\begin{rmk}[Special case]
The Weibull distribution includes many other well-known distributions as special cases. In particular, setting shape $k=1$ and scale $\sigma = 1/r$ for $r>0$ yields an Exponential distribution with rate parameter $r>0$.
\end{rmk}

\subsection{Student-t noise}\label{sec:bydistro_student}

We say a random variable $W \sim \text{Student}(\dof)$ has the ``Student-t'' distribution with $\dof>0$ degrees of freedom when it has the following probability density function:
\begin{align}\label{eqn:student_density}
p(u) = \frac{\Gamma((\dof+1)/2)}{\sqrt{\dof\pi}\Gamma(\dof/2)}\left(1 + \frac{u^{2}}{\dof}\right)^{-(\dof+1)/2}, \qquad u \in \RR.
\end{align}
where the integer $\dof$ is called the ``degrees of freedom'' parameter of the distribution, and $\Gamma(u)=\int_{0}^{\infty} e^{-t} t^{u-1} \, dt$ is the usual gamma function of Euler \citep{abramowitz1964a}. Throughout this section, for $q>0$ we shall write $A_{\dof} \defeq \Gamma((\dof+1)/2) / \sqrt{\dof\pi}\Gamma(\dof/2)$ for the normalization constant.

The relative entropy computation for the Student case cannot be given in closed form; if we want to compute it directly for two Student distributions with degrees of freedom $\dof_{1}$ and $\dof_{2}$, we have
\begin{align*}
\KL(\dof_{1};\dof_{2}) & = \log\frac{A_{\dof_{1}}}{A_{\dof_{2}}} - \frac{\dof_{1}+1}{2} \exx_{\dof_{1}} \log\left(1 + \frac{t^{2}}{\dof_{1}}\right) + \frac{\dof_{2}+1}{2} \exx_{\dof_{1}} \log\left(1 + \frac{t^{2}}{\dof_{2}}\right)\\
& = \log\frac{A_{\dof_{1}}}{A_{\dof_{2}}} - \frac{\dof_{1}+1}{2}\left(\Psi\left(\frac{\dof_{1}+1}{2}\right) - \Psi\left(\frac{\dof_{1}}{2}\right)\right) + \exx_{\dof_{1}} \log\left(1 + \frac{t^{2}}{\dof_{2}}\right)
\end{align*}
noting that the final summand requires numerical integration to evaluate. We write $\Psi(u) \defeq d\log\Gamma(u)/du$ to denote the derivative of the log-Gamma function, often called the digamma function.\footnote{There are many references for computing the Student relative entropy, e.g.~\citet{villa2018a} and the papers cited within for a recent reference. Numerical integration is required for the last term. The digamma function is tractable, see \citet{abramowitz1964a} for more details. Example implementations are the \texttt{gsl\_sf\_psi} function in the GNU Scientific Library, and \texttt{special.digamma} in SciPy.}

%
\begin{prop}[Estimator computation]\label{prop:student_comp}
Let $W \sim \text{Student}(\dof)$ for $\dof > 5$. Then following our notation in (\ref{eqn:comp_Mk}) and (\ref{eqn:comp_breakdown}), we have
\begin{align*}
M_{0,1}^{0}(u) & = \int_{-\infty}^{u} p(t) \, dt\\
M_{0,1}^{1}(u) & = (-1) \frac{\dof}{\dof-1} A_{\dof} \left(1 + \frac{u^{2}}{\dof}\right)^{-(\dof-1)/2}\\
M_{0,1}^{2}(u) & = \frac{\dof^{3/2}}{\sqrt{\dof-2}(\dof-1)} \, \frac{A_{\dof}}{A_{\dof-2}} N_{\dof-2}^{0}\left(u\sqrt{\frac{\dof-2}{\dof}}\right) - \frac{\dof}{(\dof-1)} A_{\dof} \, u \left(1 + \frac{u^{2}}{\dof}\right)^{-(\dof-1)/2}\\
M_{0,1}^{3}(u) & = \frac{2\dof^{2}}{(\dof-2)(\dof-1)} \frac{A_{\dof}}{A_{\dof-2}} N_{\dof-2}^{1}\left(u \sqrt{\frac{\dof-2}{\dof}}\right) - \frac{\dof}{(\dof-1)} A_{\dof} \, u^{2} \left(1 + \frac{u^{2}}{\dof}\right)^{-(\dof-1)/2}
\end{align*}
where $N_{m}^{k}(\cdot)$ denotes $M_{0,1}^{k}(\cdot)$ under a Student-t distribution with $m \leq \dof$ degrees of freedom.
\end{prop}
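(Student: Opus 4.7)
The plan is to compute each $M_{0,1}^{k}(u) = \int_{-\infty}^{u} t^{k} A_{\dof}(1+t^{2}/\dof)^{-(\dof+1)/2}\,dt$ directly for $k=0,1,2,3$, using the $k=1$ antiderivative as the basic building block and reducing $k=2,3$ to lower-order integrals against a rescaled Student-t density.

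First, $k=0$ is immediate from the definition of the CDF. For $k=1$, I would use the substitution $v = 1+t^{2}/\dof$, which gives the closed-form antiderivative
\begin{align*}
\int t\left(1+\tfrac{t^{2}}{\dof}\right)^{-(\dof+1)/2} dt = -\frac{\dof}{\dof-1}\left(1+\tfrac{t^{2}}{\dof}\right)^{-(\dof-1)/2} + C.
\end{align*}
Evaluating from $-\infty$ to $u$, the boundary term at $-\infty$ vanishes because $\dof > 5 > 1$, yielding the stated formula.

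For $k=2,3$, I would integrate by parts, writing $t^{k}\left(1+t^{2}/\dof\right)^{-(\dof+1)/2} = t^{k-1}\cdot t\left(1+t^{2}/\dof\right)^{-(\dof+1)/2}$ and using the $k=1$ antiderivative as $g$. In the $k=2$ case this leaves a residual integral $\int_{-\infty}^{u}(1+t^{2}/\dof)^{-(\dof-1)/2}\,dt$, and in the $k=3$ case a residual $\int_{-\infty}^{u} t(1+t^{2}/\dof)^{-(\dof-1)/2}\,dt$. The key observation is the rescaling $t = s\sqrt{\dof/(\dof-2)}$, under which $(1+t^{2}/\dof)^{-(\dof-1)/2} = (1+s^{2}/(\dof-2))^{-((\dof-2)+1)/2}$ is exactly the Student-t density with $\dof-2$ degrees of freedom (up to the normalization $A_{\dof-2}$). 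This lets me express the residual integrals as $N_{\dof-2}^{0}(u\sqrt{(\dof-2)/\dof})$ and $N_{\dof-2}^{1}(u\sqrt{(\dof-2)/\dof})$, respectively. Tracking the constants $A_{\dof}/A_{\dof-2}$ together with the Jacobian $\sqrt{\dof/(\dof-2)}$ produces the precise coefficients in the statement.

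The main technical obstacle is bookkeeping: correctly composing the two normalization constants, the Jacobian factor, and the $-\nu/(\nu-1)$ from integration by parts, while simultaneously verifying that the boundary terms $t^{k-1}(1+t^{2}/\dof)^{-(\dof-1)/2}$ vanish at $-\infty$. The latter requires $k-1 < \dof-1$, i.e.\ $\dof > k$; since $k \leq 3$, the assumed $\dof > 5$ gives ample margin for every boundary evaluation to vanish and for all four moments involved (through $\int t^3 p(t)\,dt$ locally) to be finite.
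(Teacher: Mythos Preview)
Your proposal is correct and the computations check out. The organization differs slightly from the paper's, and the difference is worth noting.

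The paper works ``upward'': it differentiates expressions like $(1+t^{2}/\dof)^{-(\dof+1)/2}$ and $t(1+t^{2}/\dof)^{-(\dof+1)/2}$, which produces integrands with exponent $-(\dof+3)/2$; it then uses the substitution $t \mapsto t\sqrt{\dof/(\dof+2)}$ to recognize these as (unnormalized) Student-$t$ densities with $\dof+2$ degrees of freedom. So each identity is first established for $\exx_{\dof+2} W^{k} I\{W\leq u\}$ in terms of quantities at $\dof$, and only at the end is the index shifted back down by two to obtain the stated formulas at $\dof$. Your route works ``downward'': you integrate the $k=1$ case explicitly, then integrate by parts against that antiderivative, producing residual integrals with exponent $-(\dof-1)/2$, which the substitution $t \mapsto t\sqrt{\dof/(\dof-2)}$ identifies directly with the Student-$t$ density at $\dof-2$ degrees of freedom. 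No re-indexing step is needed.

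Both arguments rest on the same two ingredients (the elementary antiderivative for $k=1$ and the degree-of-freedom rescaling), so this is the same proof in two orientations. Your version is a bit more direct, since every line is already stated at the target degree $\dof$; the paper's version makes the recursive structure $N_{\dof}^{k} \leftrightarrow N_{\dof\pm 2}^{k'}$ perhaps more visible. Either way, the constant bookkeeping you flag is indeed the only place one can slip, and the boundary-term vanishing you need ($t^{k-1}(1+t^{2}/\dof)^{-(\dof-1)/2} \to 0$) holds already for $\dof>3$, so $\dof>5$ is comfortable.
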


Computation of $\xm$ and $\xa$ under Student-t noise is slightly more complicated than in the previous cases seen above, but as shown in Proposition \ref{prop:student_comp}, it is tractable. Next we look at guarantees on the statistical accuracy.

\begin{prop}[Deviation bounds]\label{prop:student_devbd}
In both the additive and multiplicative cases below, the relative entropy can be bounded above by
\begin{align*}
\KL(\ndist;\prior) \leq c(\alpha,\dof) \defeq \frac{(\dof+1)}{2}\left( \log\left(1+\frac{\alpha^{2}}{\dof}\right) + \frac{4|\alpha|\Gamma((\dof+1)/2)}{\sqrt{\dof\pi}\,\Gamma(\dof/2)(\dof-1)} \right)
\end{align*}
where $\alpha \in \RR$ is a free parameter specified below, and $\dof$ is the degrees of freedom parameter of the underlying Student-t distribution.

Additive case: consider noise $\ndist = \text{Student}(\dof)$ with prior $\prior = \text{Student}(\dof) - \alpha$ for an arbitrary constant $\alpha \in \RR$. Setting the scaling parameter as
\begin{align*}
s^{2} = \frac{n(\exx_{\ddist}X^{2}+\dof/(\dof-2))}{2(c(\alpha,\dof) + \log(\delta^{-1}))}
\end{align*}
we have with probability at least $1-2\delta$ that
\begin{align*}
|\xa - \exx_{\ddist}X| \leq \sqrt{\frac{2(\exx_{\ddist}X^{2}+\dof/(\dof-2))(c(\alpha,\dof)+\log(\delta^{-1}))}{n}}.
\end{align*}

Multiplicative case: consider noise $\ndist = \text{Student}(\dof)+\alpha$ for an arbitrary constant $\alpha \in \RR$, with prior $\prior = \text{Student}(\dof)$. Setting the scaling parameter as
\begin{align*}
s^{2} = \frac{n (\alpha^{2}(\dof-2)+\dof) \exx_{\ddist}X^{2}}{2(\dof-2)(c(\alpha,\dof) + \log(\delta^{-1}))}
\end{align*}
we have with probability at least $1-2\delta$ that
\begin{align*}
|\xm-\exx_{\ddist}X| \leq \sqrt{\frac{2(\alpha^{2}(\dof-2)+\dof)\exx_{\ddist}X^{2}(c(\alpha,\dof) + \log(\delta^{-1}))}{(\dof-2)n}}
\end{align*}
\end{prop}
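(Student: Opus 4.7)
The plan is to prove the proposition in three stages: (i) establish the uniform bound $\KL(\ndist;\prior) \leq c(\alpha,\dof)$ shared by both cases, (ii) specialize (\ref{eqn:xa_KL_bound}) to get the additive bound, (iii) specialize (\ref{eqn:xm_KL_bound}) to get the multiplicative bound. Once (i) is in hand, the remaining work is moment computation under the (shifted) Student distribution and an optimization over $s$.

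For (i), observe that by translation invariance both KL computations reduce to the same quantity, namely $\tfrac{\dof+1}{2}\,\exx_{\dof}\log\bigl((1+(u+\alpha)^{2}/\dof)/(1+u^{2}/\dof)\bigr)$, which one reads off directly from the Student density. To tease out the $\log(1+\alpha^{2}/\dof)$ summand, combine the two elementary inequalities $(u+\alpha)^{2} \leq u^{2}+\alpha^{2}+2|u||\alpha|$ and $(1+u^{2}/\dof)(1+\alpha^{2}/\dof)\geq 1+u^{2}/\dof+\alpha^{2}/\dof$ to obtain
\begin{align*}
\frac{1+(u+\alpha)^{2}/\dof}{1+u^{2}/\dof} \leq (1+\alpha^{2}/\dof)\left(1+\frac{2|u||\alpha|/\dof}{(1+u^{2}/\dof)(1+\alpha^{2}/\dof)}\right).
\end{align*}
Take $\log$, apply $\log(1+x)\leq x$ to the bracketed factor, and drop the $(1+\alpha^{2}/\dof)\geq 1$ from the denominator; the remainder is then bounded by $2|u||\alpha|/(\dof+u^{2})$. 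Use the crude bound $1/(\dof+u^{2})\leq 1/\dof$ together with the standard Student-t identity $\exx_{\dof}|u| = 2\dof A_{\dof}/(\dof-1)$ (valid since $\dof>5>1$) to bound the expectation of the remainder by $4|\alpha|A_{\dof}/(\dof-1)$; multiplying by $(\dof+1)/2$ gives $c(\alpha,\dof)$.

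For (ii) and (iii), plug $c(\alpha,\dof)$ into (\ref{eqn:xa_KL_bound}) and (\ref{eqn:xm_KL_bound}). The additive case uses $\exx_{\ndist}\epsilon=0$ and $\exx_{\ndist}\epsilon^{2}=\dof/(\dof-2)$ (legitimate since $\dof>5>2$), so the right-hand side collapses to $\exx_{\ddist}X/s + (\exx_{\ddist}X^{2}+\dof/(\dof-2))/(2s^{2}) + (c(\alpha,\dof)+\log(\delta^{-1}))/n$; multiplying through by $s$ and rearranging yields $\xa-\exx_{\ddist}X \leq A/(2s) + sB/n$ with $A = \exx_{\ddist}X^{2}+\dof/(\dof-2)$ and $B = c(\alpha,\dof)+\log(\delta^{-1})$. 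Balancing the two terms reproduces the stated $s^{2}$ and gives the upper-tail bound $\sqrt{2AB/n}$. The multiplicative case is handled identically via (\ref{eqn:xm_KL_bound}), now with $\exx_{\ndist}\epsilon=\alpha$ and $\exx_{\ndist}\epsilon^{2} = (\alpha^{2}(\dof-2)+\dof)/(\dof-2)$, producing the announced form after combining the numerator over $\dof-2$. In both cases the companion bound $\trunc(u) \geq -\log(1-u+u^{2}/2)$ from (\ref{eqn:trunc_keyprop}), applied to $-X_{i}$, supplies the matching lower tail, and a union bound over the two halves yields the $1-2\delta$ two-sided guarantee.

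The main obstacle is hitting exactly the form of $c(\alpha,\dof)$: it is tempting to invoke the tighter symmetry-based cancellation $\exx_{\dof}\,2u\alpha/(\dof+u^{2})=0$, which would replace the stated bound by $\tfrac{\dof+1}{2}\log(1+\alpha^{2}/\dof)$ alone and remove the $|\alpha|$-linear term entirely. Using $|u||\alpha|$ in place of $u\alpha$ sacrifices that cancellation but keeps the estimate manifestly non-negative and uniform in the sign of $\alpha$, which is necessary since $\alpha$ is free in the proposition; this cruder route, combined with $1/(\dof+u^{2})\leq 1/\dof$ rather than the exact closed form $\exx_{\dof}|u|/(\dof+u^{2}) = 2A_{\dof}/(\dof+1)$, is what produces the $1/(\dof-1)$ factor appearing in $c(\alpha,\dof)$.
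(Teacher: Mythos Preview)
Your argument is correct and reaches the stated $c(\alpha,\dof)$, but your handling of the relative entropy in stage~(i) differs from the paper's. The paper splits $\exx\log(1+(W+\alpha)^{2}/\dof)$ via sub-additivity of $u\mapsto\log(1+u)$ into three pieces $\exx\log(1+W^{2}/\dof)+\log(1+\alpha^{2}/\dof)+\exx\log(1+2|\alpha W|/\dof)$, then evaluates the first piece \emph{exactly} through the representation $W^{2}=\chi_{1}^{2}/(\chi_{\dof}^{2}/\dof)$ and the digamma identity $\exx\log\chi_{m}^{2}=\Psi(m/2)+\log 2$, obtaining $\Psi((\dof+1)/2)-\Psi(\dof/2)$; this then cancels against the subtracted $\exx\log(1+W^{2}/\dof)$ in the KL integrand. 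Your route sidesteps the digamma machinery entirely by bounding the \emph{ratio} $(1+(u+\alpha)^{2}/\dof)/(1+u^{2}/\dof)$ directly, so the cancellation is built in algebraically rather than recovered after an exact computation. Both routes finish the second term the same way, via $\log(1+x)\leq x$ and the folded-$t$ mean $\exx_{\dof}|W|=2\dof A_{\dof}/(\dof-1)$; yours is shorter and needs no special-function identities, while the paper's records the closed form for $\exx_{\dof}\log(1+W^{2}/\dof)$ as a by-product. Stages~(ii)--(iii) match the paper's proof essentially verbatim. One small remark on your closing commentary: the claim that symmetry cancellation would leave exactly $\tfrac{\dof+1}{2}\log(1+\alpha^{2}/\dof)$ is not quite right---working from the exact identity $(1+(u+\alpha)^{2}/\dof)/(1+u^{2}/\dof)=1+(\alpha^{2}+2u\alpha)/(\dof+u^{2})$ and $\log(1+x)\leq x$ gives instead $\tfrac{\dof+1}{2}\,\alpha^{2}\,\exx_{\dof}[1/(\dof+W^{2})]$ after the odd term vanishes---but this is side discussion and does not affect the validity of your proof.
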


\noindent With Propositions \ref{prop:student_comp}--\ref{prop:student_devbd}, via (\ref{eqn:comp_breakdown}) we can compute using the general form
\begin{align*}
\xhat & = \frac{s}{n} \sum_{i=1}^{n} F_{W}(a_{i},b_{i}),
\end{align*}
and with $W \sim \text{Student}(\dof)$ for large enough $\dof$ (as specified in Proposition \ref{prop:student_comp}), we specialize as
\begin{align*}
\xm & \text{ case: } \enspace a_{i}=\frac{\alpha X_{i}}{s}, \enspace b_{i} = \frac{|X_{i}|}{s}\\
\xa & \text{ case: } \enspace a_{i}=\frac{X_{i}}{s}, \enspace b_{i} = \frac{1}{s}.
\end{align*}
Once again we remark that the $b_{i}$ computations can be done with either $X_{i}$ or $|X_{i}|$, since by symmetry the resulting noise distribution conditioned on $X_{i}$ is the same, just as in the Normal case (section \ref{sec:bydistro_normal}).

\section{Empirical analysis}\label{sec:empirical}

In this section, we use controlled simulations to investigate the behavior of the estimators $\xm$ and $\xa$ under various types of noise distributions, and compare this with other benchmark estimators. In addition to comparing the actual distribution of the deviations $|\xhat - \exx_{\ddist}X|$ with confidence intervals derived in section \ref{sec:theory}, we also pay particular attention to how performance depends on the underlying distribution, the mean to standard deviation ratio, and the sample size.\footnote{Full implementations of all the experiments carried out in this work are made available at the following online repository: \url{https://github.com/feedbackward/1dim.git}.}

\subsection{Experimental setup}

\paragraph{Data generation}
For each experimental setting and each independent trial, we generate a sample $X_{1},\ldots,X_{n}$ of size $n$, compute some estimator $\{X_{i}\}_{i=1}^{n} \mapsto \xhat$, and record the deviation $|\xhat-\exx_{\ddist}X|$. The sample sizes range over $n \in \{10,20,30,\ldots,100\}$, and the number of trials is $10^{4}$. We draw data from two distribution families: the Normal family with mean $\mu$ and variance $\sigma^{2}$, and the log-Normal family, with log-mean $\mu_{\text{log}}$ and log-variance $\sigma_{\text{log}}^{2}$, under multiple parameter settings. Regarding the variance, we have ``low,'' ``mid,'' and ``high'' settings,  which correspond to $\sigma=0.5, 5.0, 50.0$ in the Normal case, and $\sigma_{\text{log}}=1.1, 1.35, 1.75$ in the log-Normal case. Over all settings, the log-location parameter of the log-Normal data is fixed at $\mu_{\text{log}}=0$. 

\paragraph{Moment control}
Of particular interest here is the impact of different mean to standard deviation ratios: we test $r(X) = \exx X / \sd(X)$ ranging over $[-2.0, 2.0]$. The standard deviation is fixed as just described, and the mean value is determined automatically as $\exx X = r(X)\sd(X)$ for each value of $r(X)$ to be tested. Shifting the Normal data is trivially accomplished by taking the desired $\mu = r(X)\sd(X)$. Shifting the log-Normal data is accomplished by subtracting the true mean (pre-shift) equal to $\exp(\mu_{\text{log}}+\sigma_{\text{log}}^{2}/2)$ to center the data, and subsequently adding the desired location.

\paragraph{Methods being tested}
We compare canonical location estimators with numerous examples from the robust estimator class described and analyzed in sections \ref{sec:overview}--\ref{sec:theory}. The methods being compared are organized in Table \ref{table:empirical_methods}. Essentially, we are comparing the empirical mean and median with different manifestations of the estimators $\xm$ and $\xa$. As additional reference, we also compare with two sub-Gaussian estimators introduced in \ref{sec:overview}, namely the median-of-means estimator and an M-estimator constructed in the style of \citet{catoni2012a}. For the former, we partition into equal-sized subsets, with $k = \lceil \log(\delta^{-1}) \rceil$. Regarding bound computations, from Theorem 4.1 of \citet{devroye2016a}, the median-of-means estimator satisfies (\ref{eqn:subG_estimators}) with constant $c = 2\sqrt{2e}$, as long as $\delta > \exp(1-n/2)$. In the event this is not satisfied, we just return the sample mean. For the latter, as an influence function we use the Gudermannian function $\psi(u) = 2\atan(\exp(u))-\pi/2$, with scaling as $s^{2} = 2n\vaa(X) / \log(\delta^{-1})$. Using the results of \citet{catoni2012a} it is then straightforward to show that the resulting estimator satisfies
\begin{align*}
|\xhat-\exx X| \leq 2 \sqrt{\frac{2\vaa(X)\log(\delta^{-1})}{n}}
\end{align*}
with probability no less than $1-2\delta$.

Since all non-classical estimators here depend on a confidence level parameter, for all experiments we set $\delta = 0.01$. Furthermore, for precise control of the experimental conditions, we use the true variance and/or second moments of the data distribution for scaling $\xm$ and $\xa$ as specified in section \ref{sec:theory}, which is known since we are in control of the underlying data distribution. The empirical median is computed (after sorting) as the middle point when $n$ is odd, or the average of the two middle points when $n$ is even. For $\xm$ under Weibull noise, $k = 2.0$ and $\sigma$ is determined automatically. For $\xa$ under Weibull noise, $k=2.0$ and $\sigma = 1.0$. For both $\xm$ and $\xa$ under Student-t noise, shift parameter is $\alpha=1$, and degrees of freedom parameter is $\dof=5.1$.

\begin{table}[h!]
\begin{center}
\begin{tabular}{|l|l|}
\hline
\texttt{mean} & Empirical mean.\\
\texttt{med} & Empirical median.\\
\texttt{mom} & Median-of-means \citep{devroye2016a}.\\
\texttt{mest} & M-estimator \citep{catoni2012a}.\\
\texttt{mult\_b} & $\xm$ under Bernoulli noise (sec.~\ref{sec:bydistro_bernoulli}).\\
\texttt{mult\_bc} & centered version of \texttt{mult\_b} (sec.~\ref{sec:bydistro_bernoulli}).\\
\texttt{mult\_g} & $\xm$ under Normal noise (sec.~\ref{sec:bydistro_normal}).\\
\texttt{mult\_w} & $\xm$ with Weibull noise (sec.~\ref{sec:bydistro_weibull}).\\
\texttt{mult\_s} & $\xm$ with Student noise (sec.~\ref{sec:bydistro_student}).\\
\texttt{add\_g} & $\xa$ with Normal noise (sec.~\ref{sec:bydistro_normal}).\\
\texttt{add\_w} & $\xa$ with Weibull noise (sec.~\ref{sec:bydistro_weibull}).\\
\texttt{add\_s} & $\xa$ with Student-t noise (sec.~\ref{sec:bydistro_student}).\\
\hline
\end{tabular}
\end{center}
\vspace{-0.5cm}
\caption{List of location estimators being evaluated in sections \ref{sec:empirical_meanSD}--\ref{sec:empirical_overdist}.}
\label{table:empirical_methods}
\end{table}

\subsection{Impact of mean-SD ratio}\label{sec:empirical_meanSD}

\begin{figure}[t]
\centering
\includegraphics[width=1.0\textwidth]{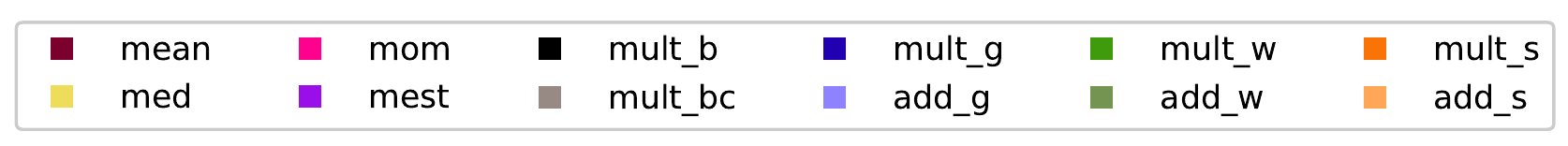}\\
\includegraphics[width=0.50\textwidth]{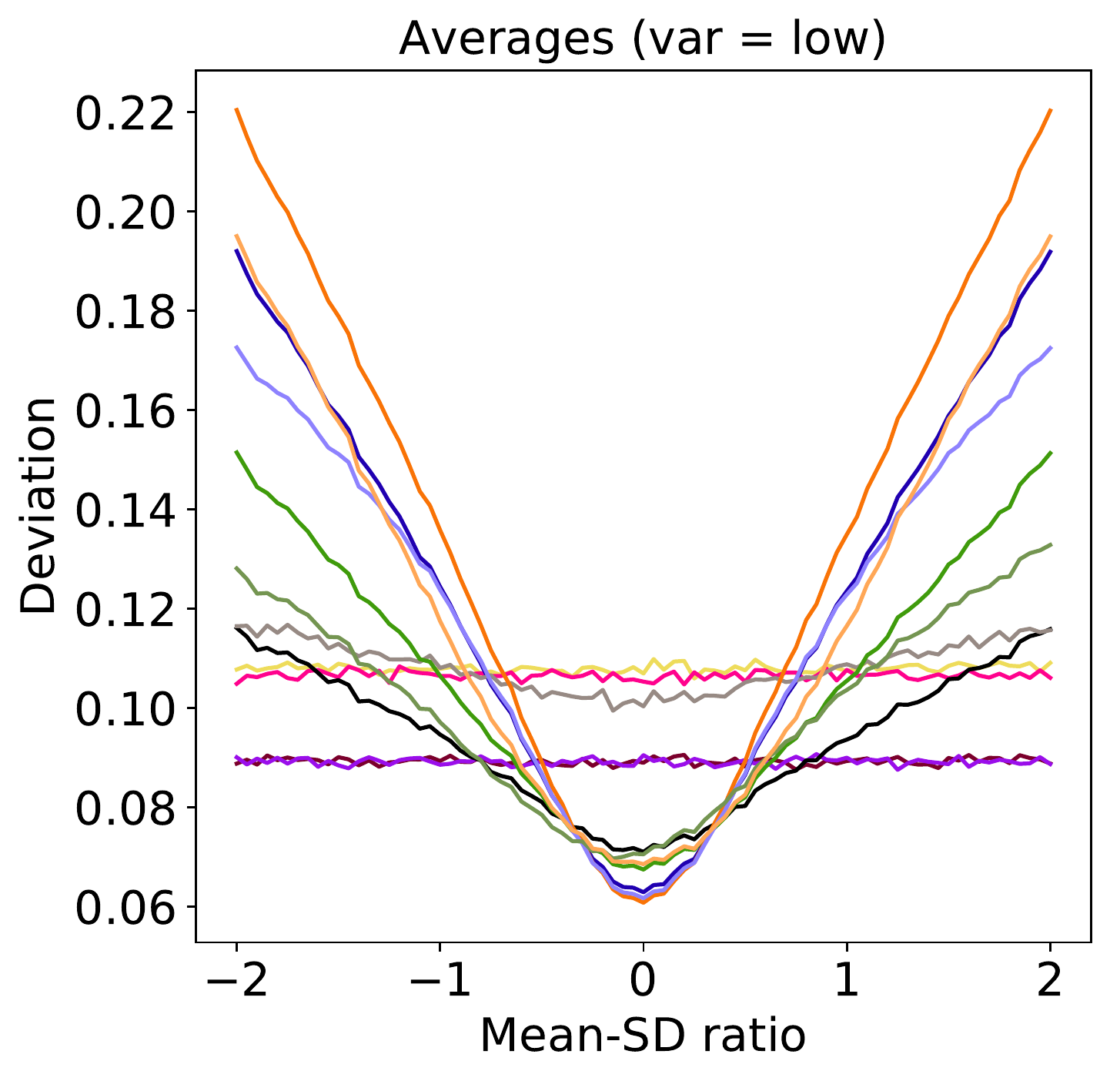}\,\includegraphics[width=0.48\textwidth]{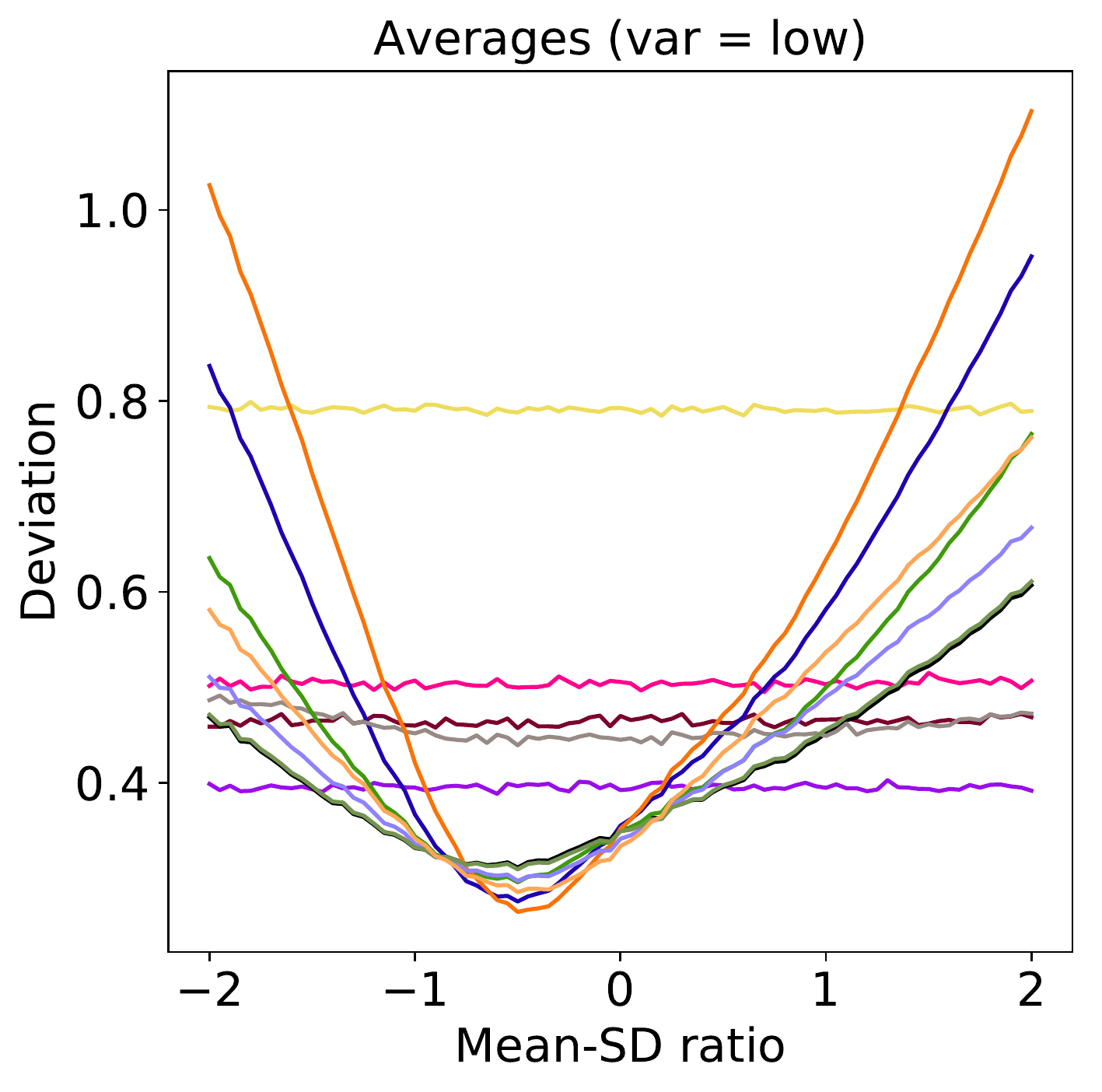}
\caption{Deviations $|\xhat - \exx_{\ddist}X|$ averaged over all trials, plotted as a function of the ratio $r(X) = \exx X / \sd(X)$. Sample size is $n=20$, variance level is low. Left: Normal data. Right: log-Normal data.}
\label{fig:overratioAves_deviations}
\end{figure}

In Figure \ref{fig:overratioAves_deviations}, we look at how the mean to standard deviation ratio impacts the performance of different estimators. Clearly, the dependence of $\xm$ and $\xa$ on the second moment, as suggested by the bounds derived in section \ref{sec:theory}, is not vacuous. A remarkably large difference in sensitivity appears between different manifestations of these new estimators, however. In order of increasing sensitivity: Bernoulli, Weibull, Gaussian, Student. Estimators with a higher sensitivity to the absolute value of the mean are more strongly biased toward a particular location value. We also note that in observing analogous plots as the variance level increases from low $\to$ mid $\to$ high, all of the $\xa$ instances (the \texttt{add\_*} methods) converge to behave identically to the Bernoulli type estimator \texttt{mult\_b}. The reason for this is simple. As $\vaa(X)$ grows, so does the value of $s$ used in Propositions \ref{prop:bernoulli_all}, \ref{prop:normal_devbd}, \ref{prop:weibull_devbd}, and \ref{prop:student_devbd}. Since we are not modifying the noise distribution in a data-driven fashion, this means that the variance of scaled noise $\epsilon / s$ decreases, taking each summand in $\xa$ of the form $\exx_{\ndist}\trunc((X/s)+(\epsilon/s))$ progressively closer to the Bernoulli case of $\exx_{\ndist}\trunc(X/s)$. This is in stark contrast to the multiplicative case $\xm$, in which the noise and the data are coupled.

Overall, the Bernoulli case is least sensitive among the uncentered estimators, and considering the ease of computation, is clearly a strong choice. Furthermore, the centered version \texttt{mult\_bc} is distinctly much less sensitive to the mean-SD ratio, which is precisely what we would expect given the discussion in section \ref{sec:bydistro_bernoulli}. In the log-Normal case, since the dependence on the parameters being controlled is asymmetric, the nature of the underlying distribution differs for positive and negative values of $r(X)$, leading to the asymmetric curves observed.

\subsection{Impact of sample size}\label{sec:empirical_overN}

\begin{figure}[t]
\centering
\includegraphics[width=0.50\textwidth]{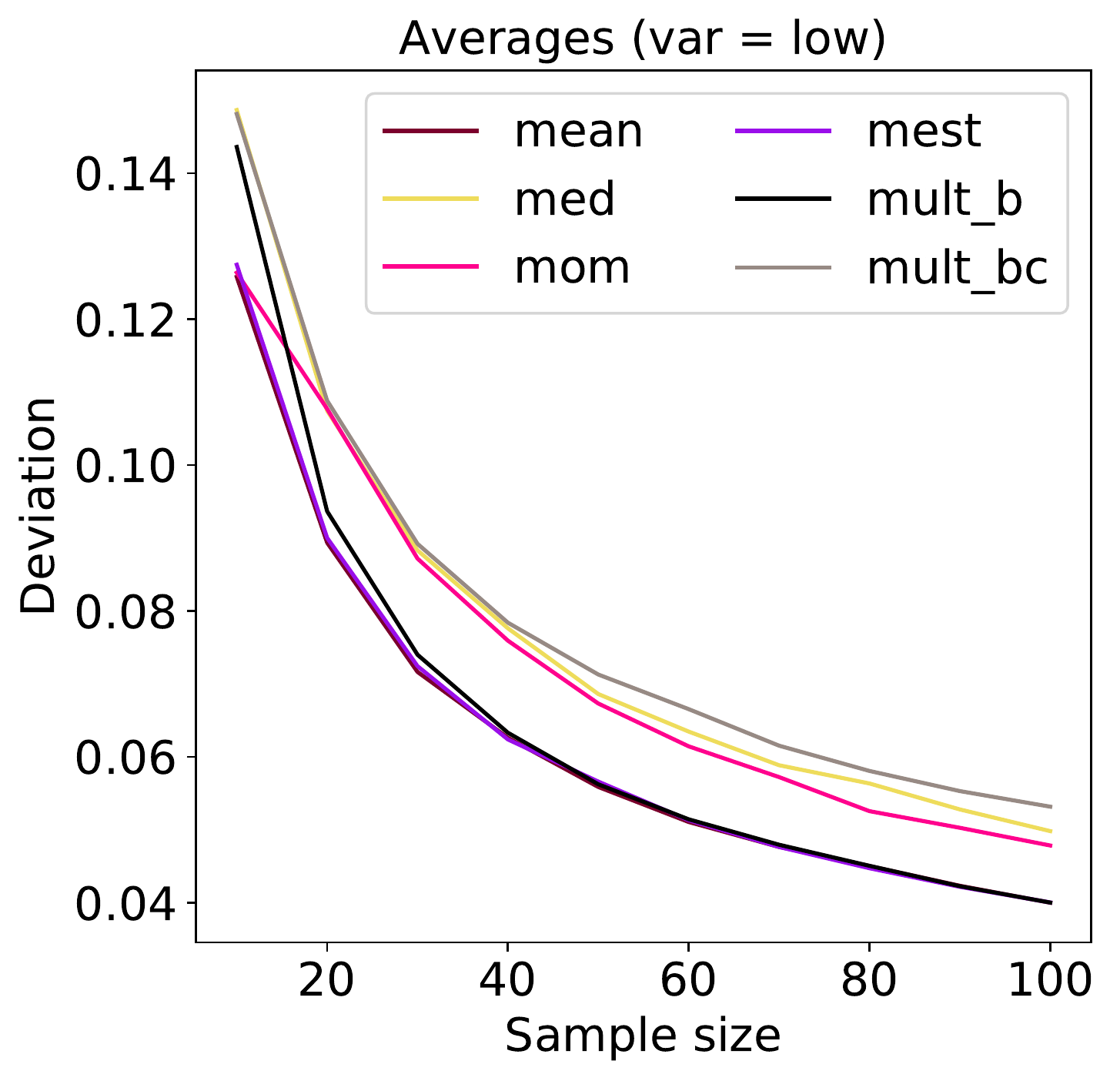}\,\includegraphics[width=0.48\textwidth]{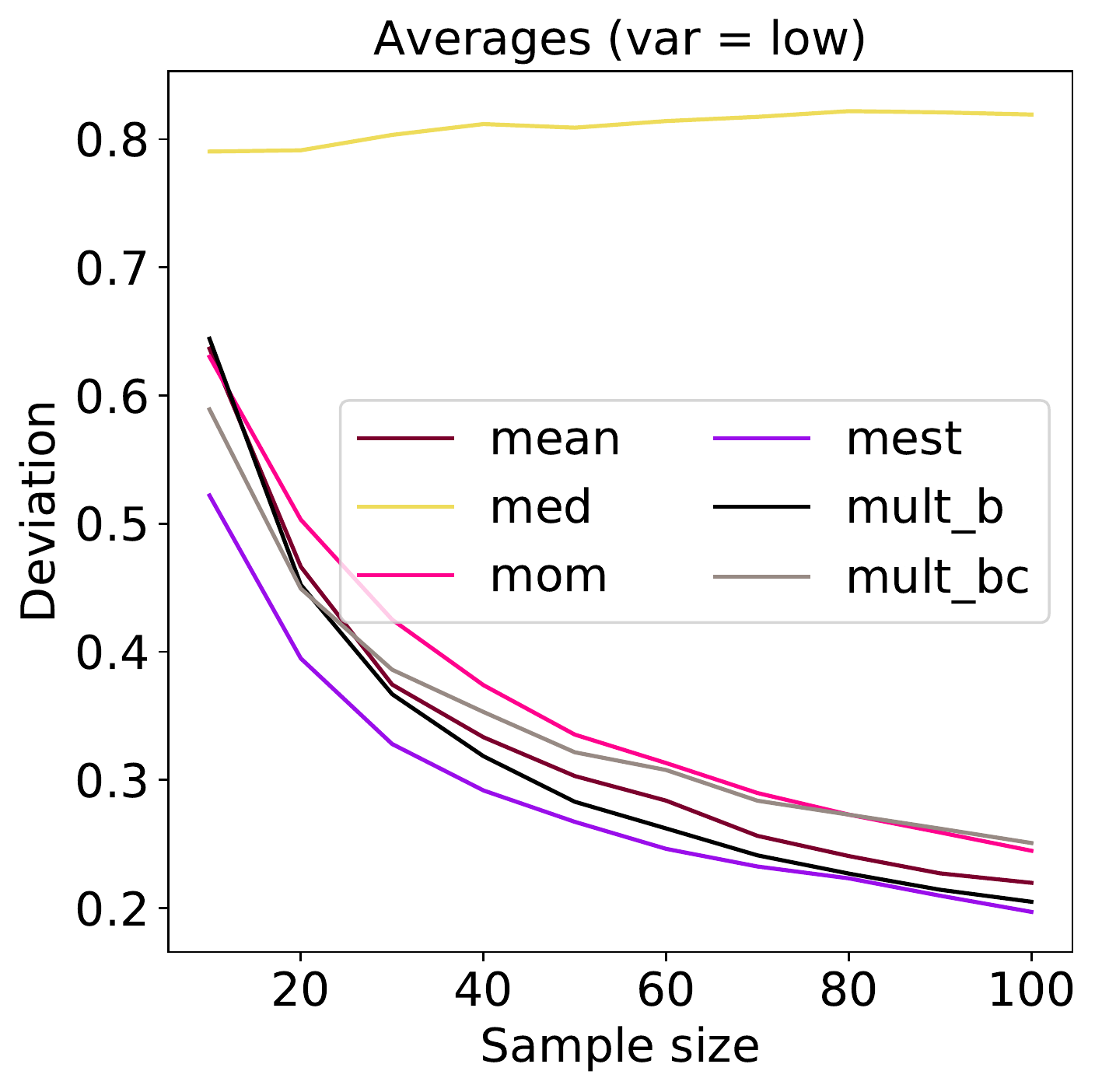}
\caption{Deviations $|\xhat - \exx_{\ddist}X|$ averaged over all trials, plotted as a function of the sample size $n$. Mean to standard deviation ratio is $r(X) = 1.0$, variance level is low. Left: Normal data. Right: log-Normal data.}
\label{fig:overNAves_deviations}
\end{figure}

In Figure \ref{fig:overNAves_deviations}, we examine how performance improves as the sample size $n$ increases; at the same time, we may observe how performance deteriorates when data is scarce. For readability, here we only compare the two classical estimators and two known sub-Gaussian estimators with the Bernoulli-type estimators from our class of interest. In addition to the consistency of $\xm$ (and its centered version) that the deviation bounds of section \ref{sec:theory} imply, we are able to confirm that $\xm$ is a very close competitor to the variance-dependent M-estimator of \citet{catoni2012a}. Without any iterative sub-routine, $\xm$ does well under the outlier-prone log-Normal case, and yet has small enough bias so as to effectively mimic the sample mean in the Normal case. Performance is well above the median-of-means estimator, but both $\xm$ and the M-estimator make use of moment information in these tests, so definitive statements about relative superiority remain difficult to make.

\subsection{Distribution of deviations}\label{sec:empirical_overdist}

\begin{figure}[t]
\centering
\includegraphics[width=0.24\textwidth]{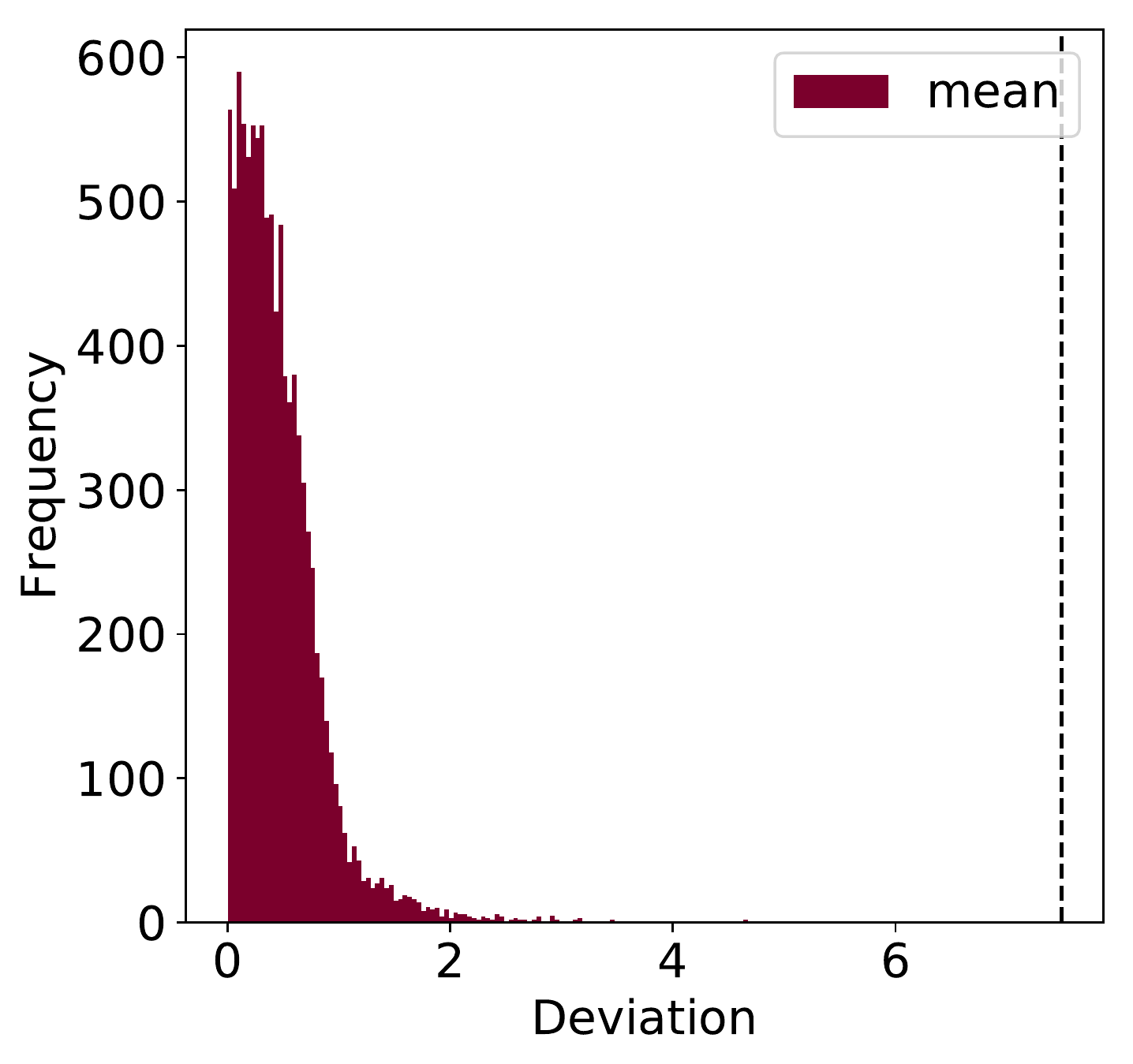}\,\includegraphics[width=0.24\textwidth]{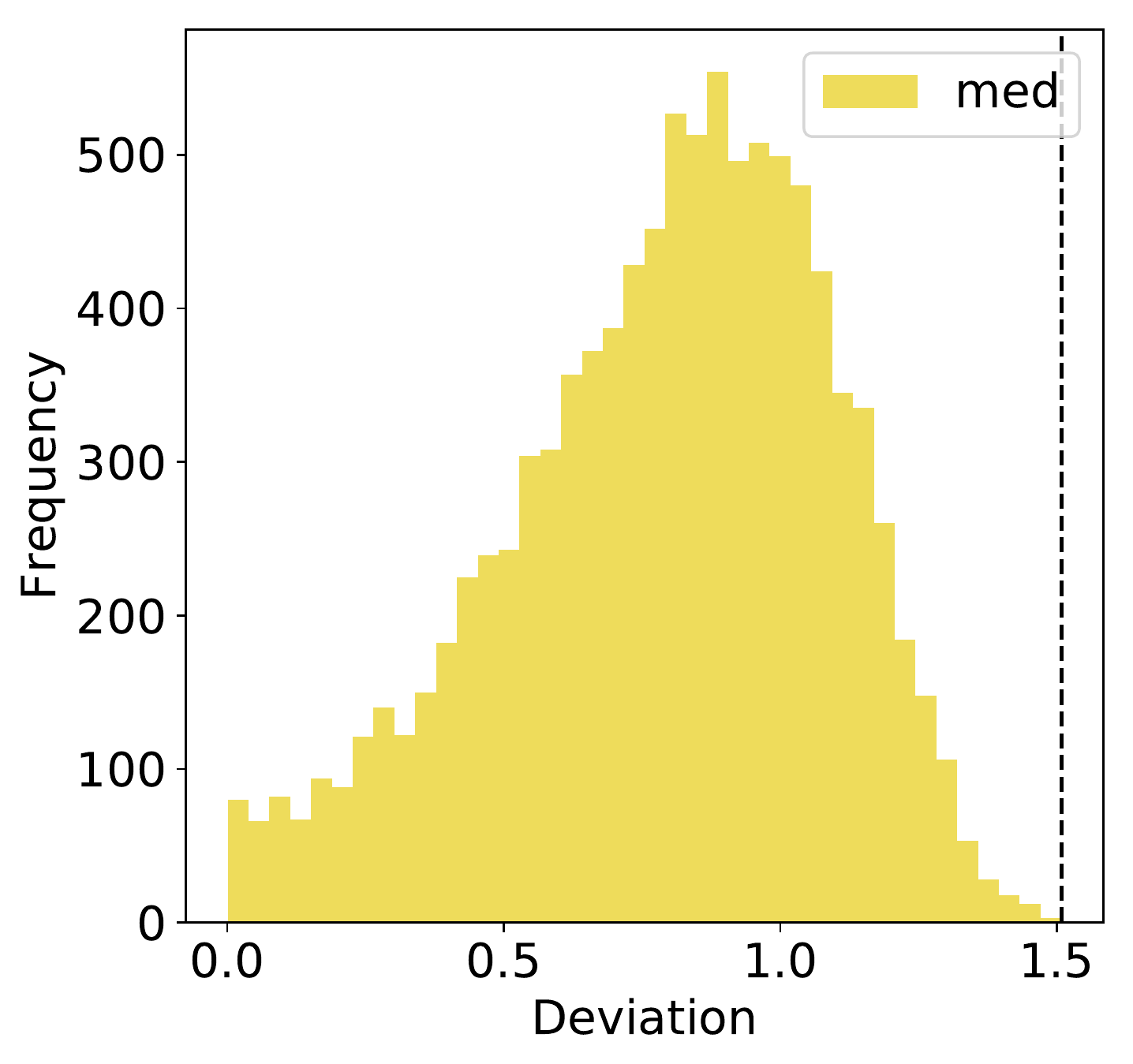}\,\includegraphics[width=0.24\textwidth]{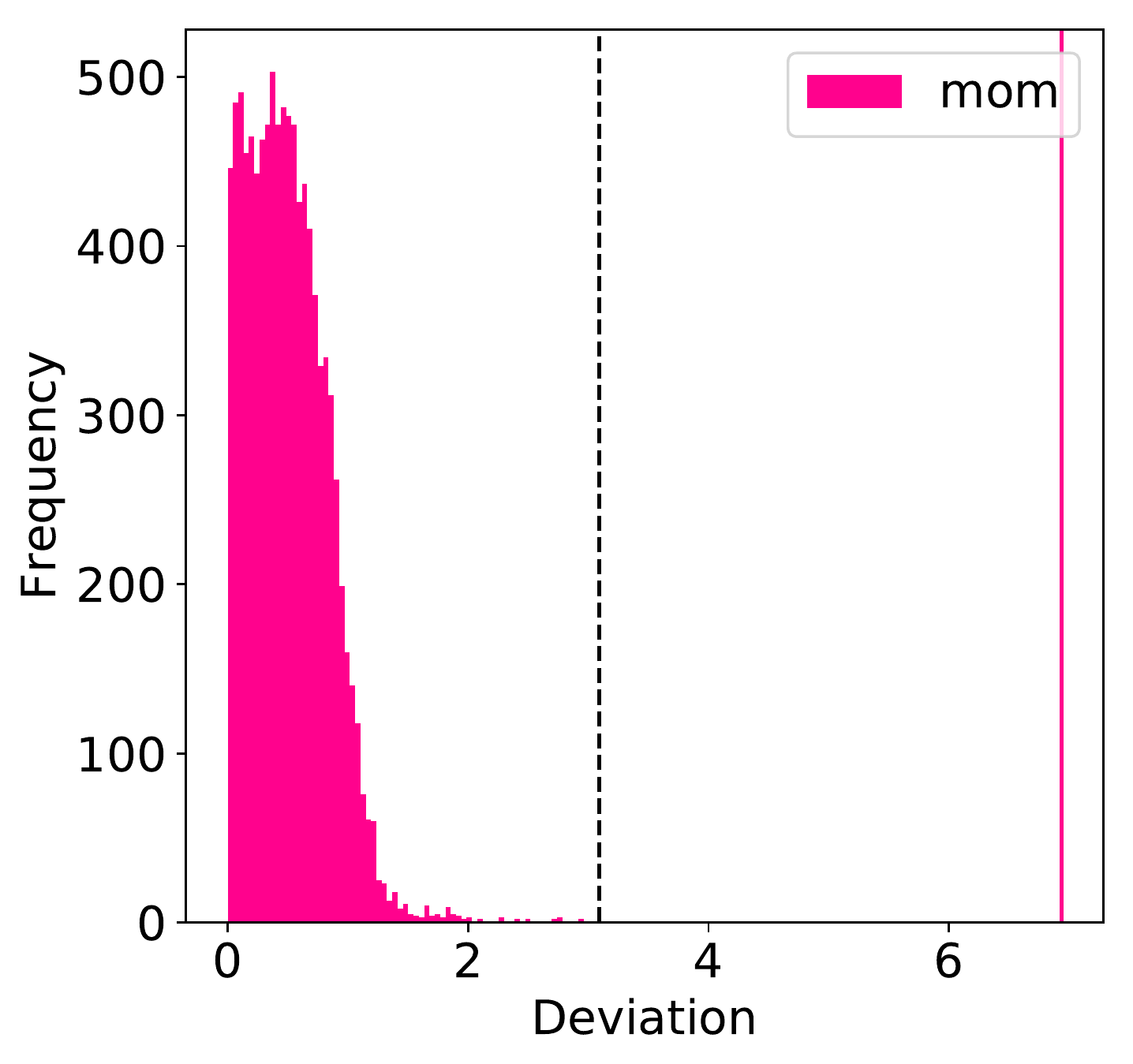}\,\includegraphics[width=0.24\textwidth]{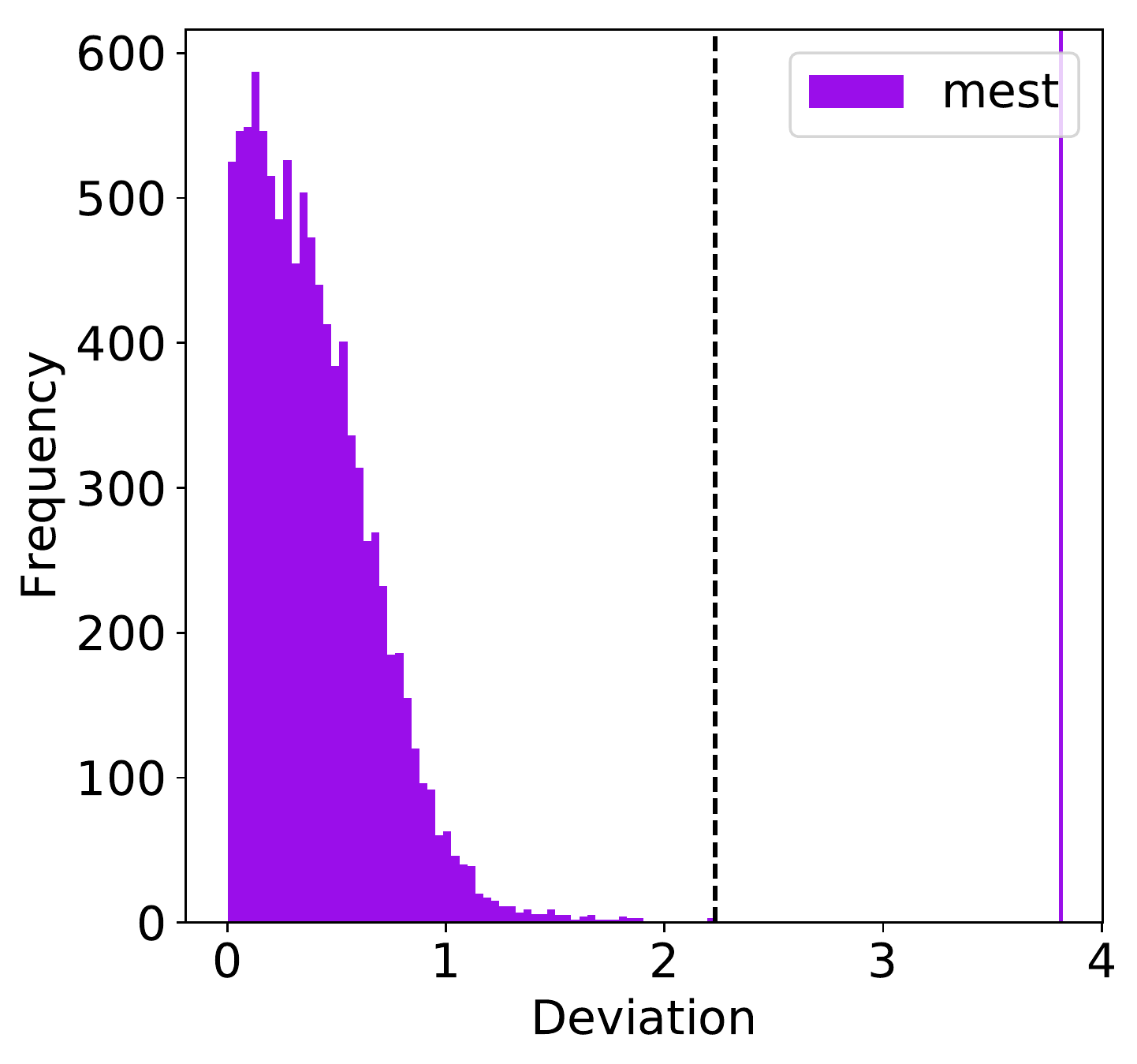}\\
\includegraphics[width=0.24\textwidth]{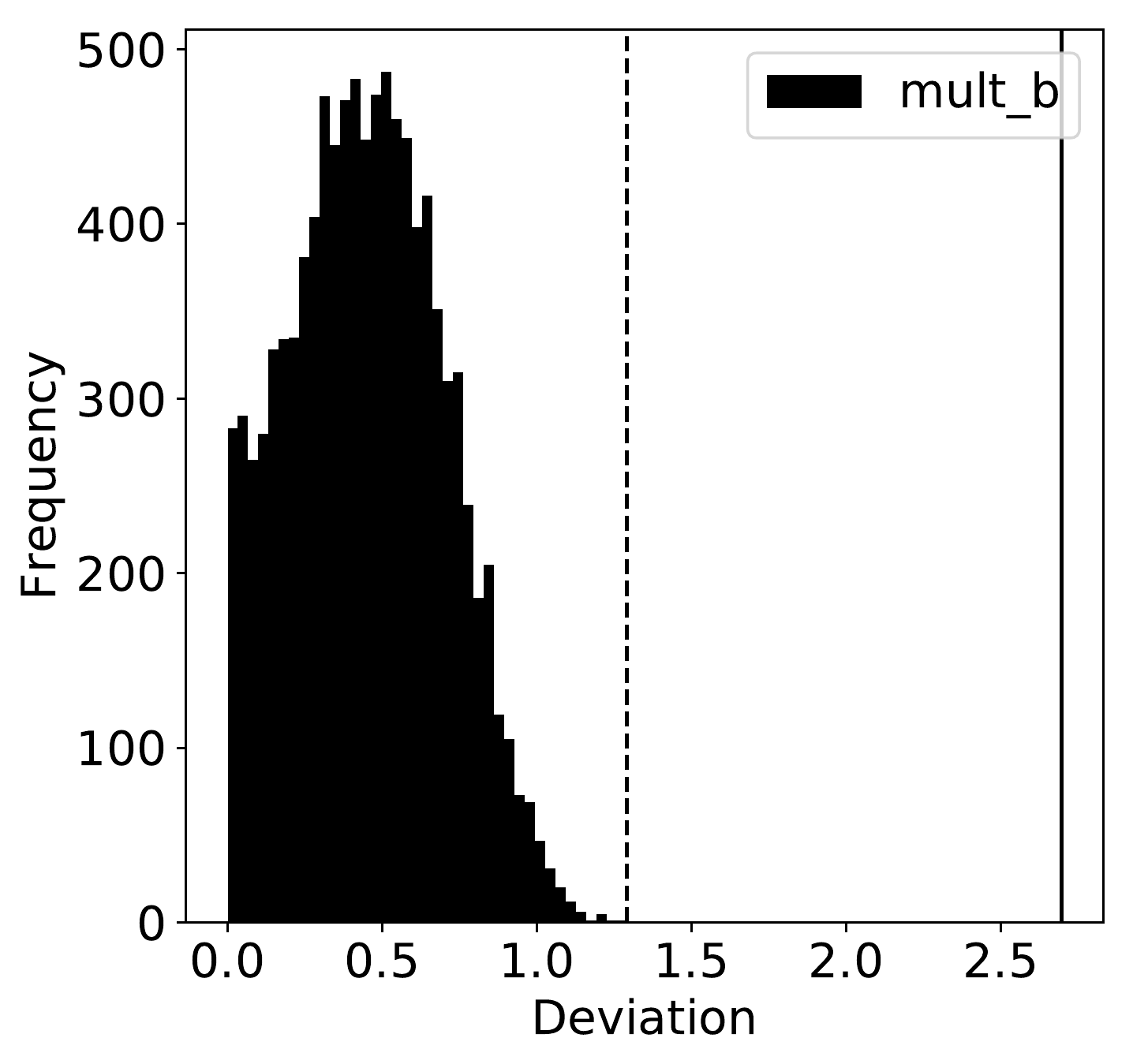}\,\includegraphics[width=0.24\textwidth]{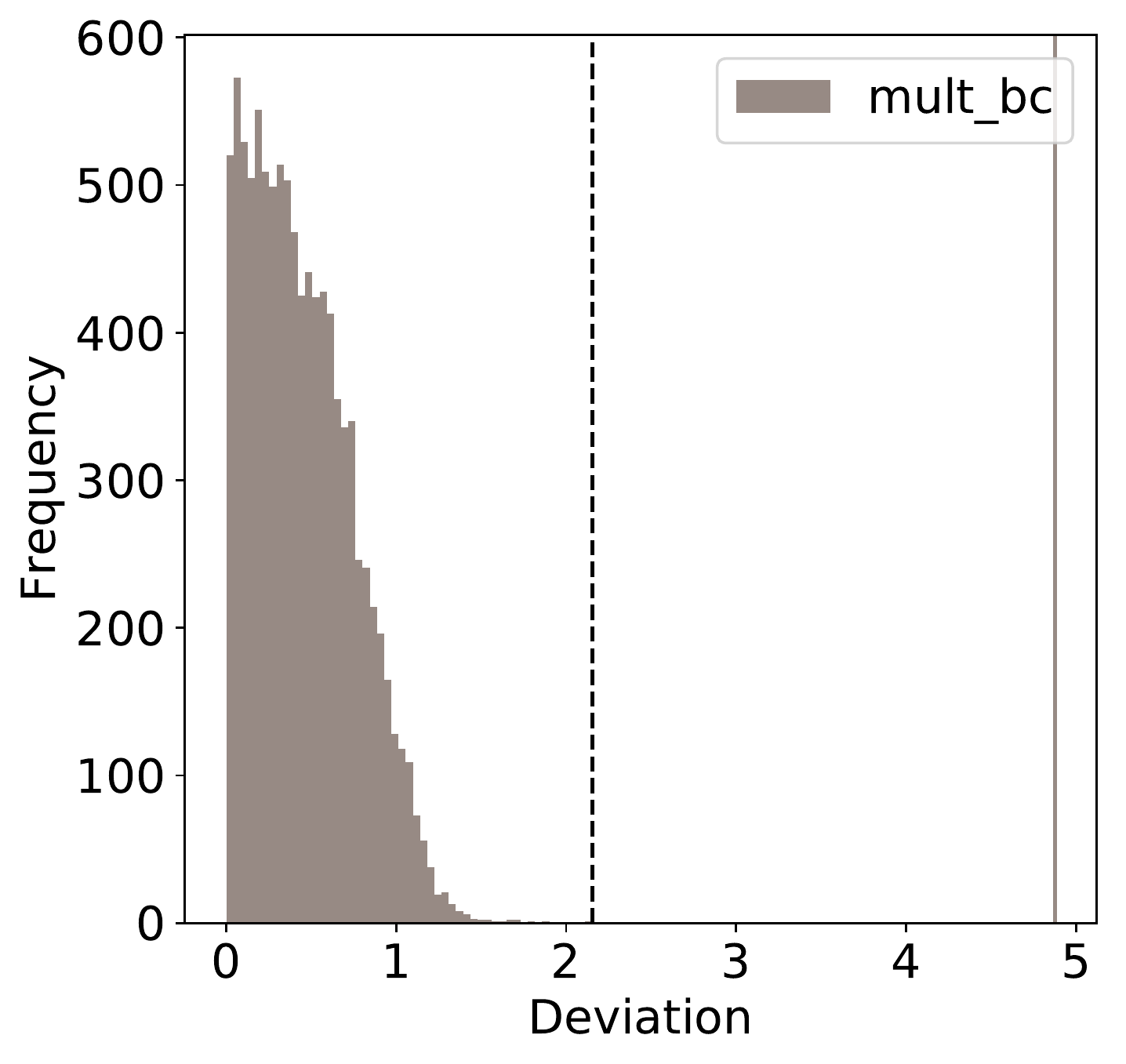}\,\includegraphics[width=0.24\textwidth]{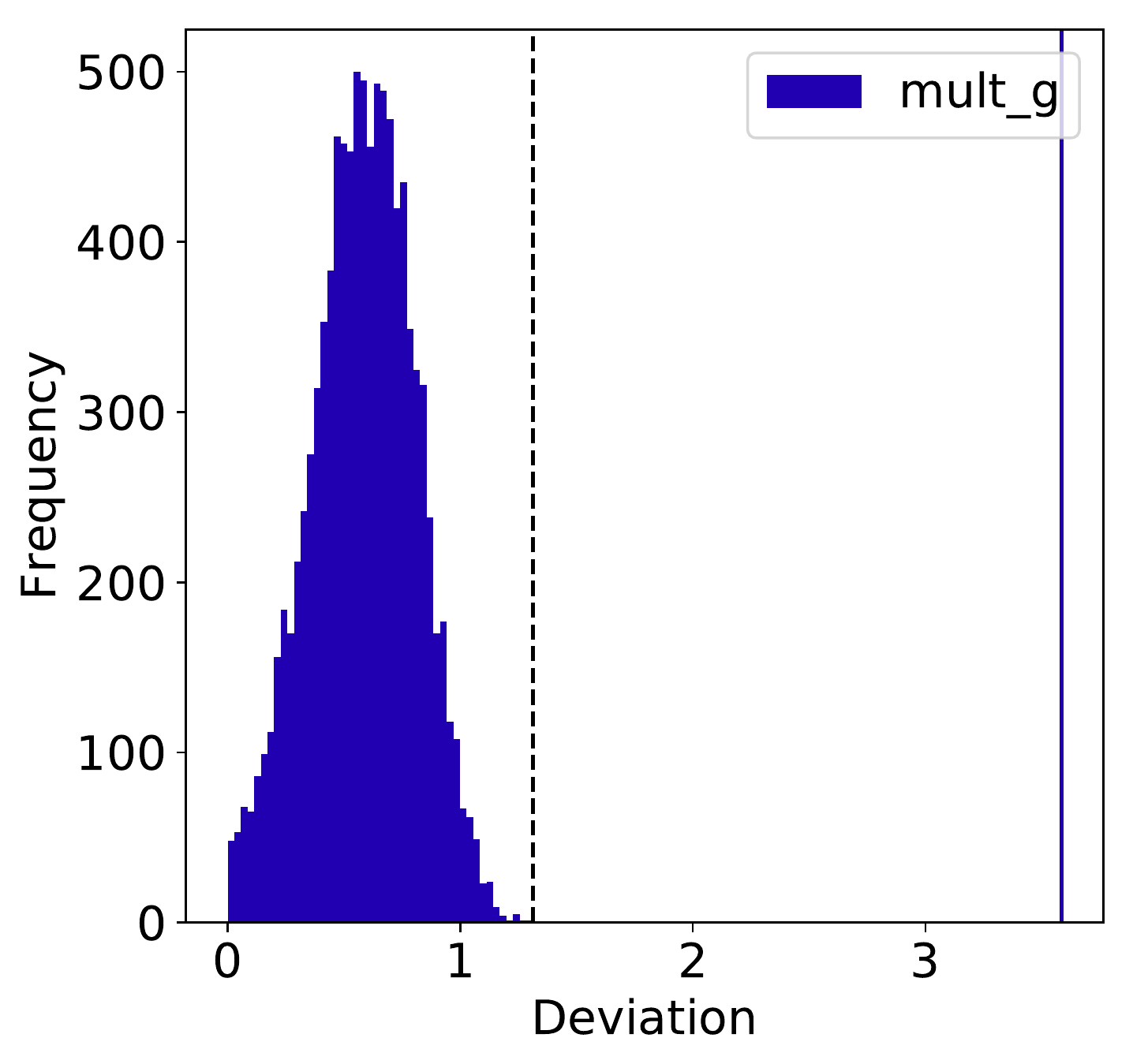}\,\includegraphics[width=0.24\textwidth]{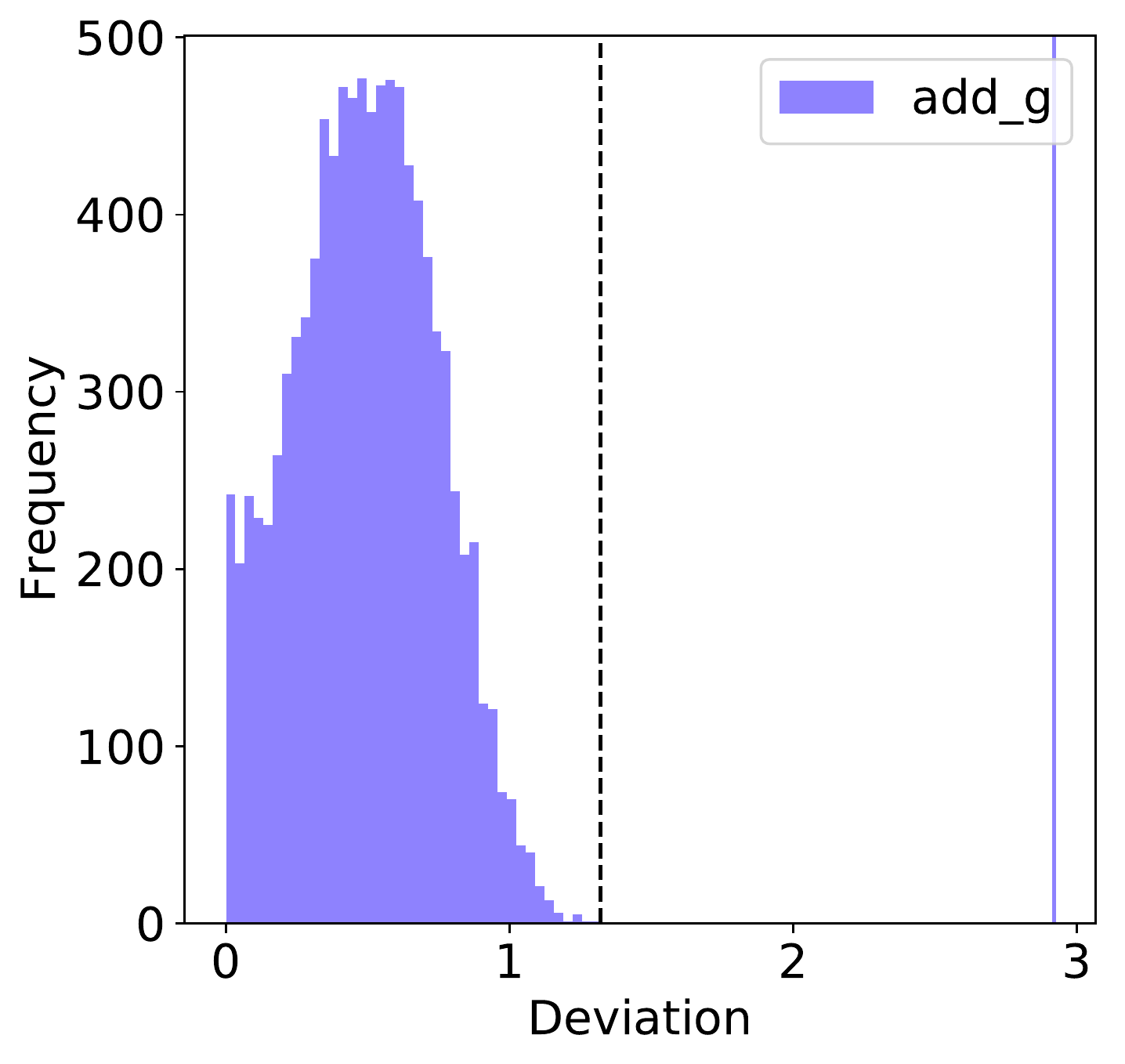}\\
\includegraphics[width=0.24\textwidth]{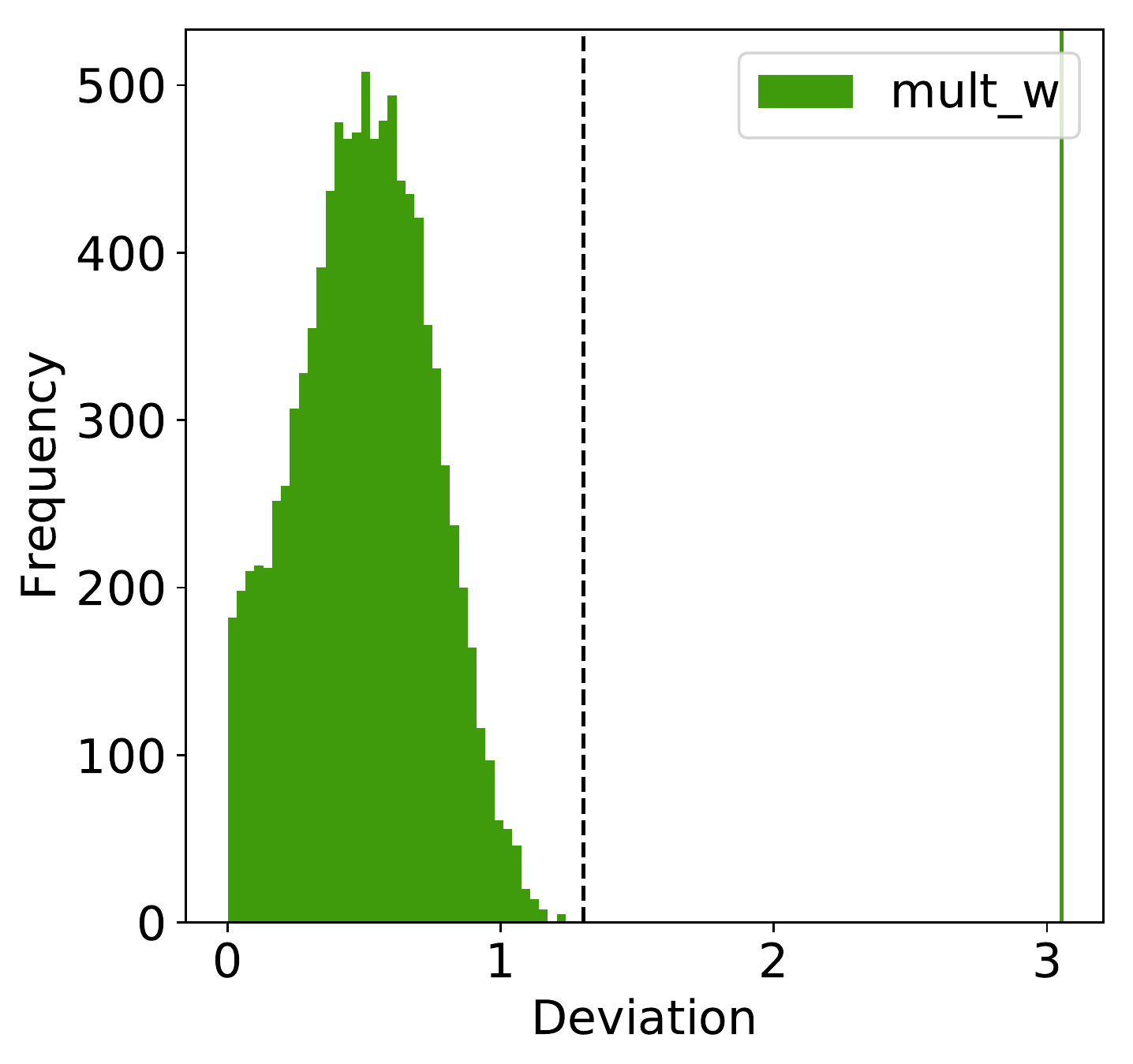}\,\includegraphics[width=0.24\textwidth]{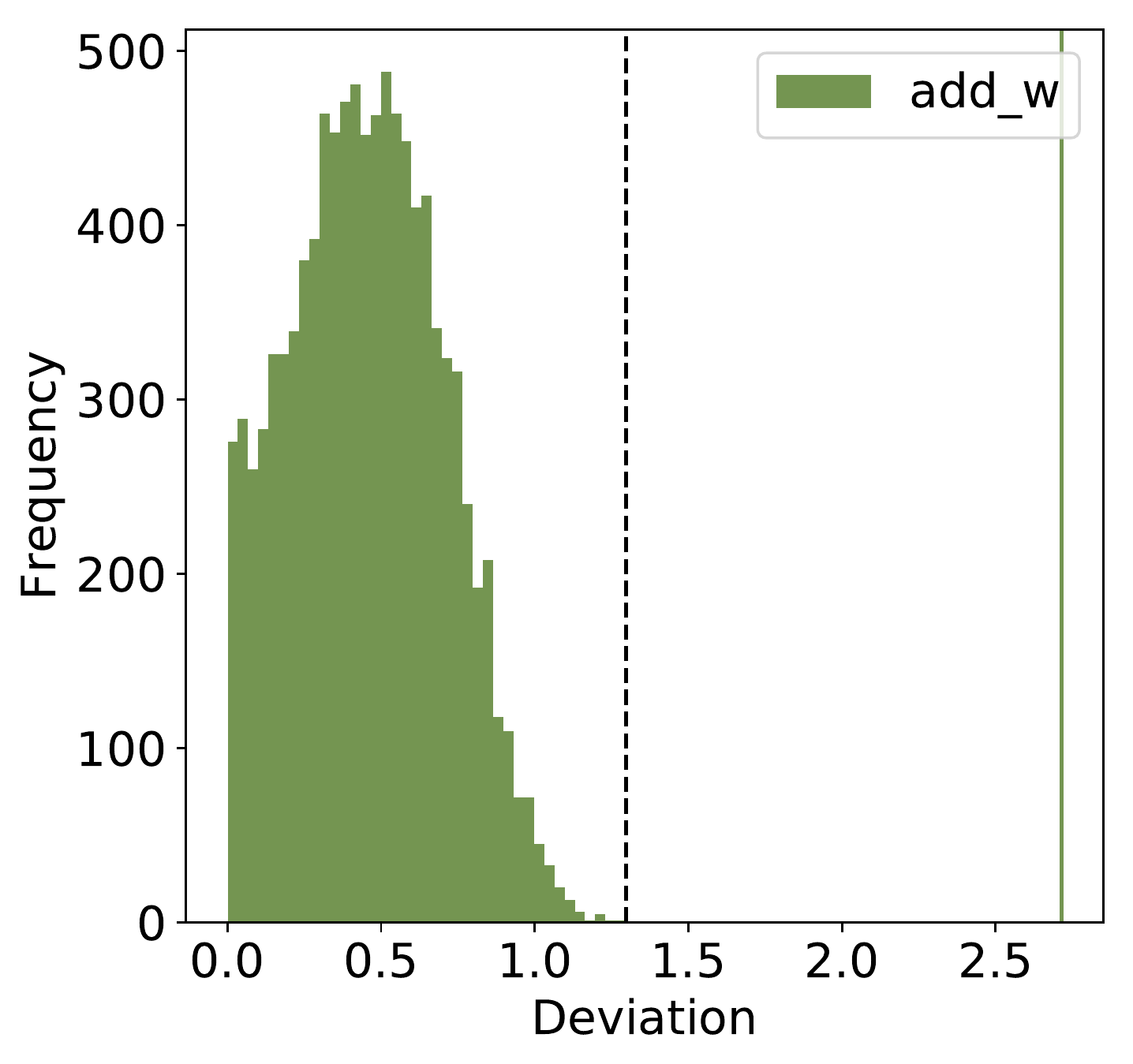}\,\includegraphics[width=0.24\textwidth]{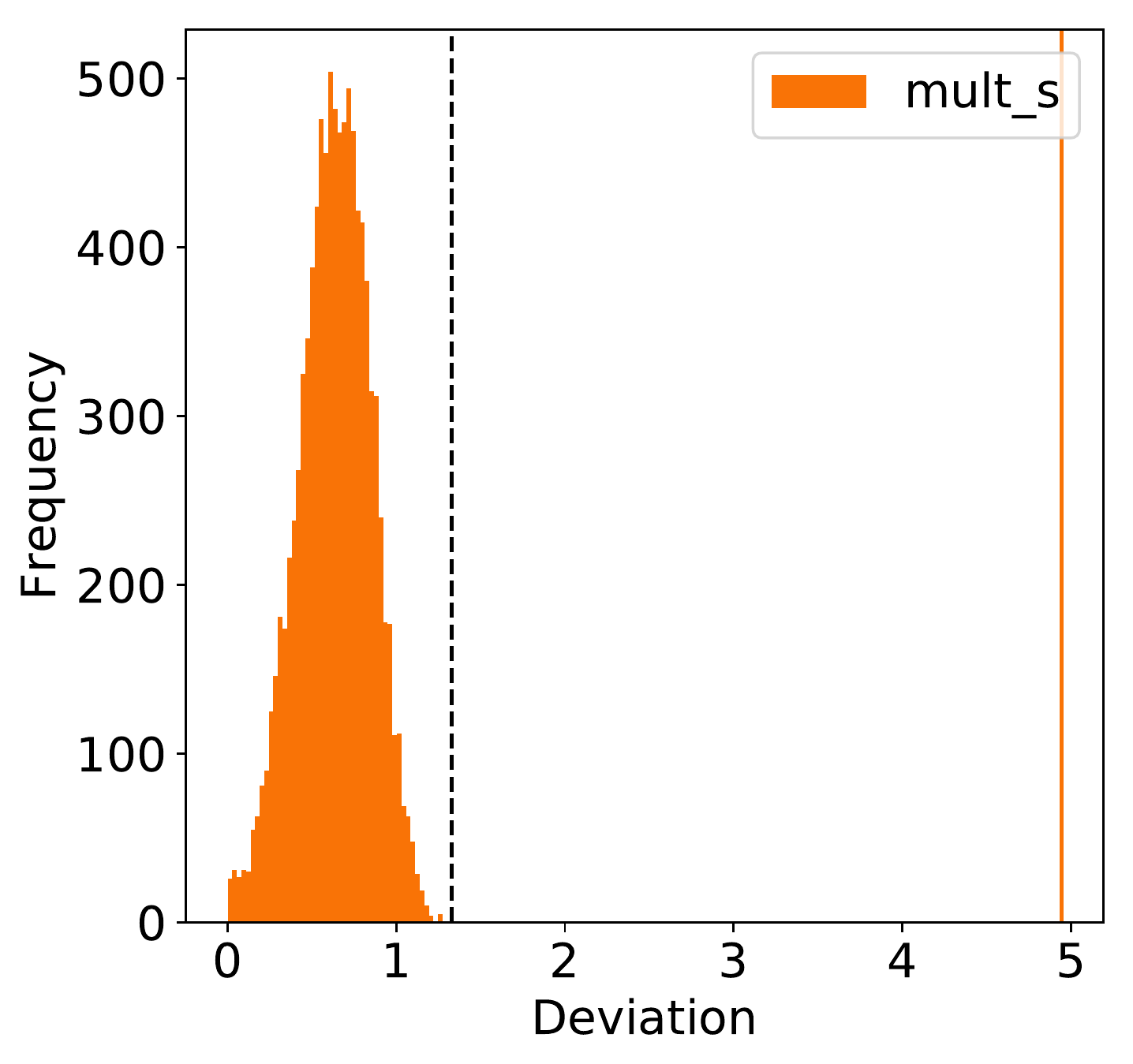}\,\includegraphics[width=0.24\textwidth]{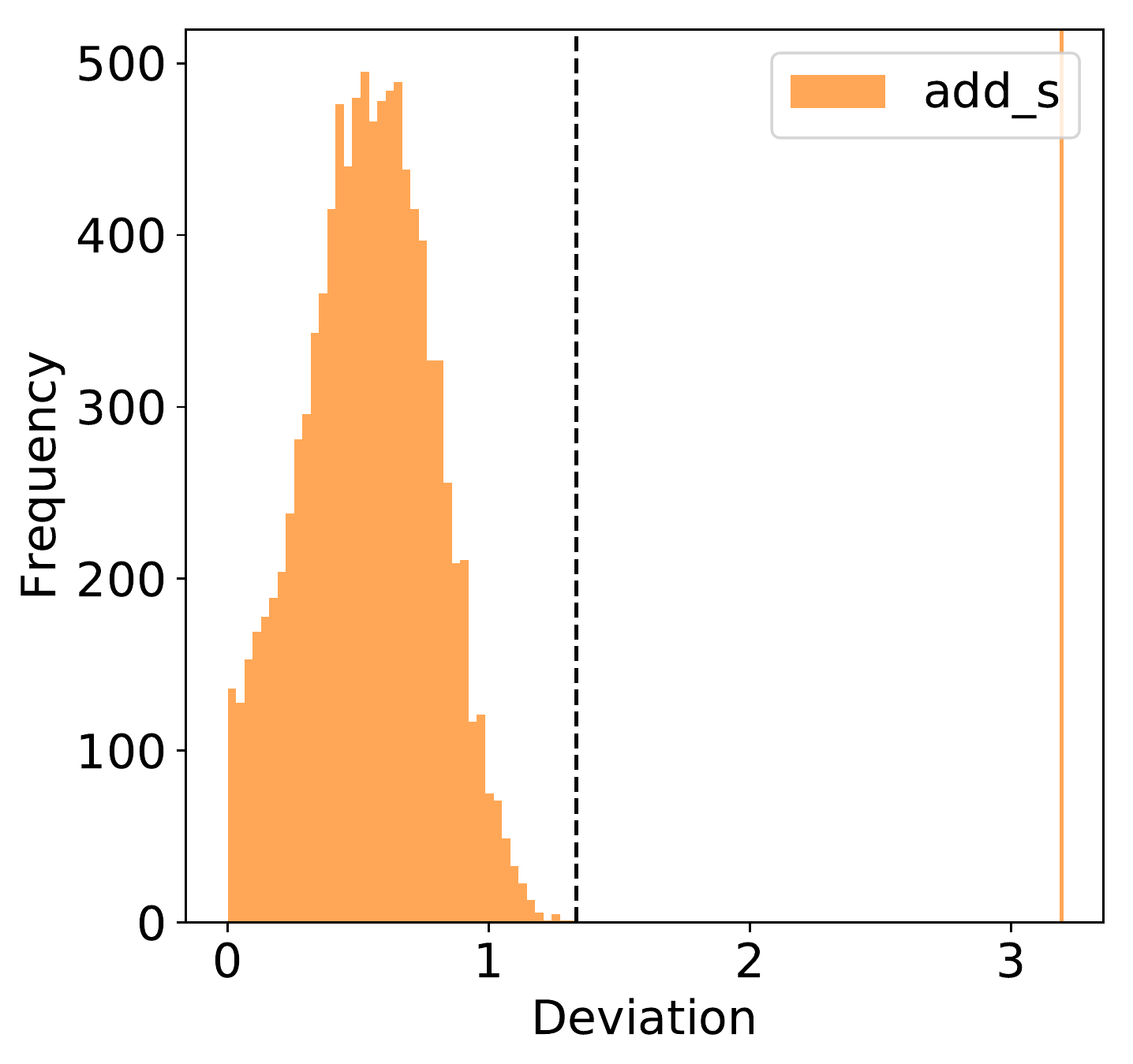}
\caption{Histograms of deviations $|\xhat - \exx_{\ddist}X|$ for all estimators being evaluated, with accompanying two-sided $1-2\delta$ confidence intervals. Data is log-Normal, sample size is $n=20$, variance level is low, mean to standard deviation ratio is $r(X)=1.0$. Since rare values are difficult to see, the dashed black vertical line indicates the largest observed deviation.}
\label{fig:hist_deviations}
\end{figure}

\begin{figure}[t]
\centering
\includegraphics[width=0.49\textwidth]{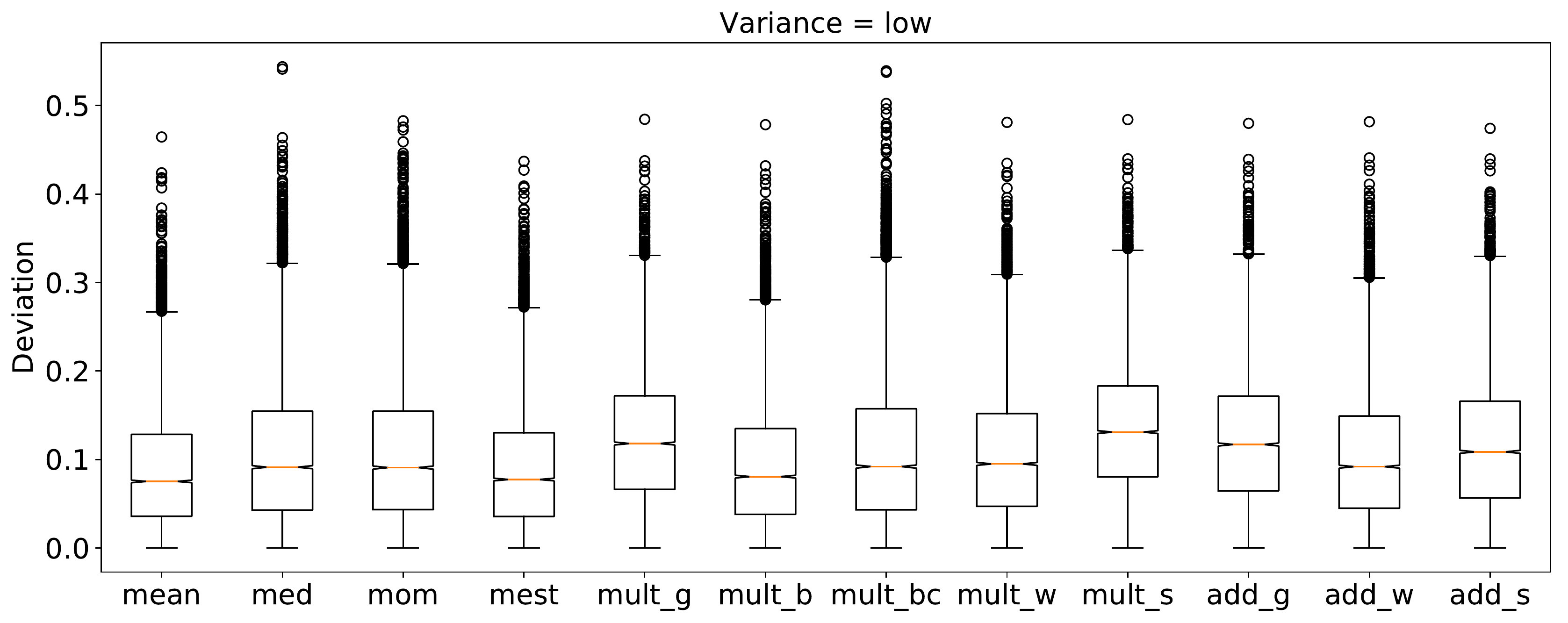}\includegraphics[width=0.49\textwidth]{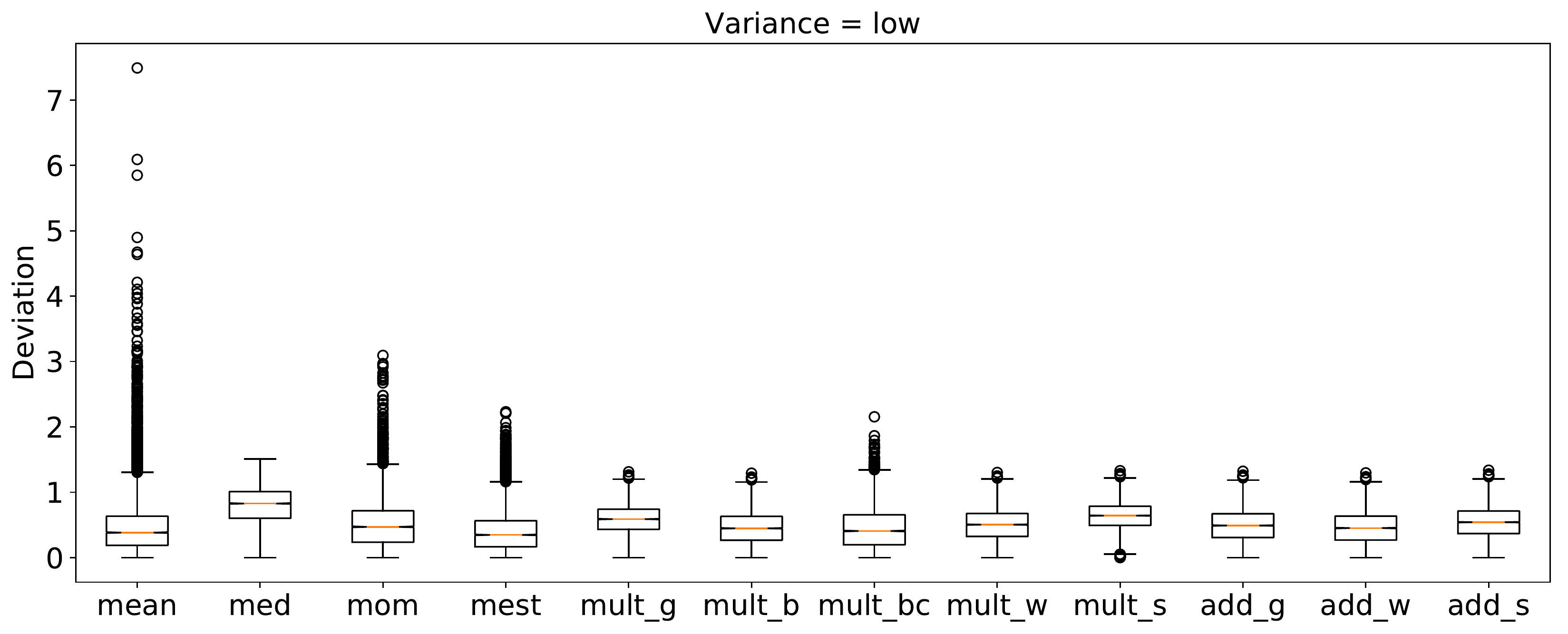}\\
\includegraphics[width=0.49\textwidth]{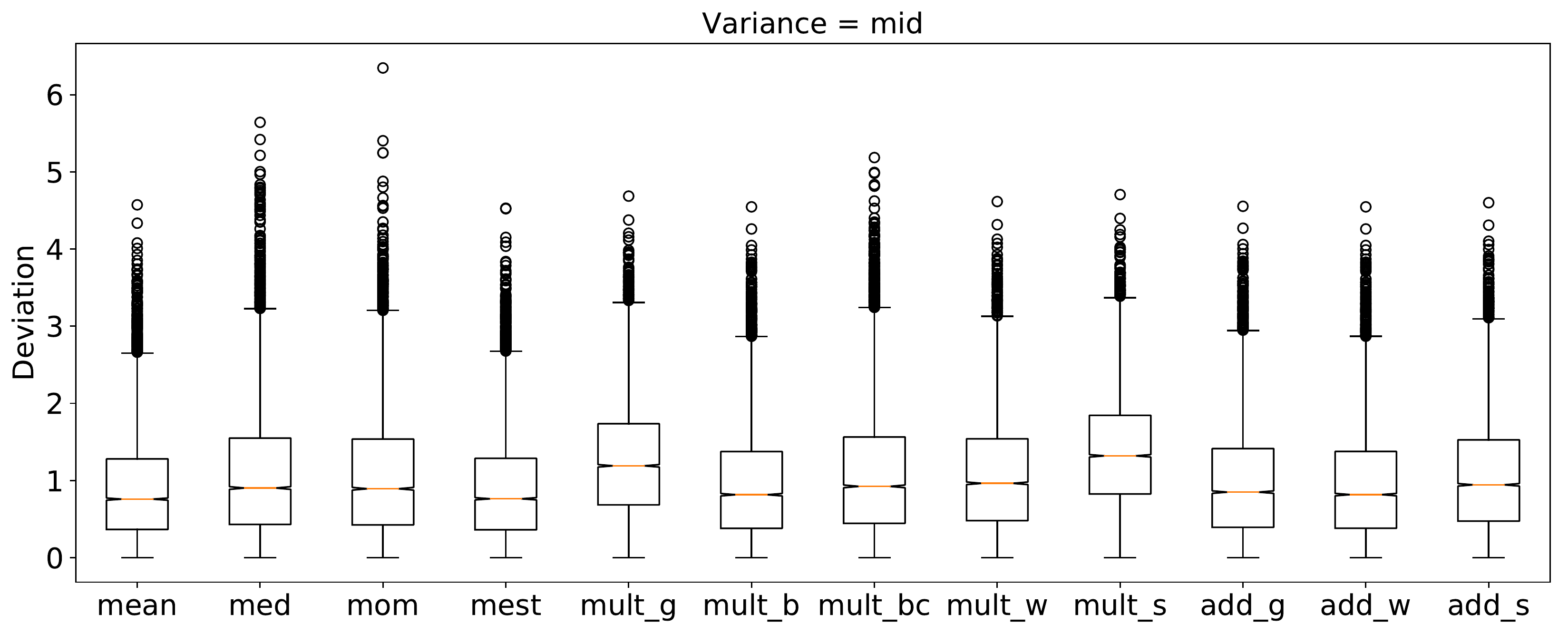}\includegraphics[width=0.49\textwidth]{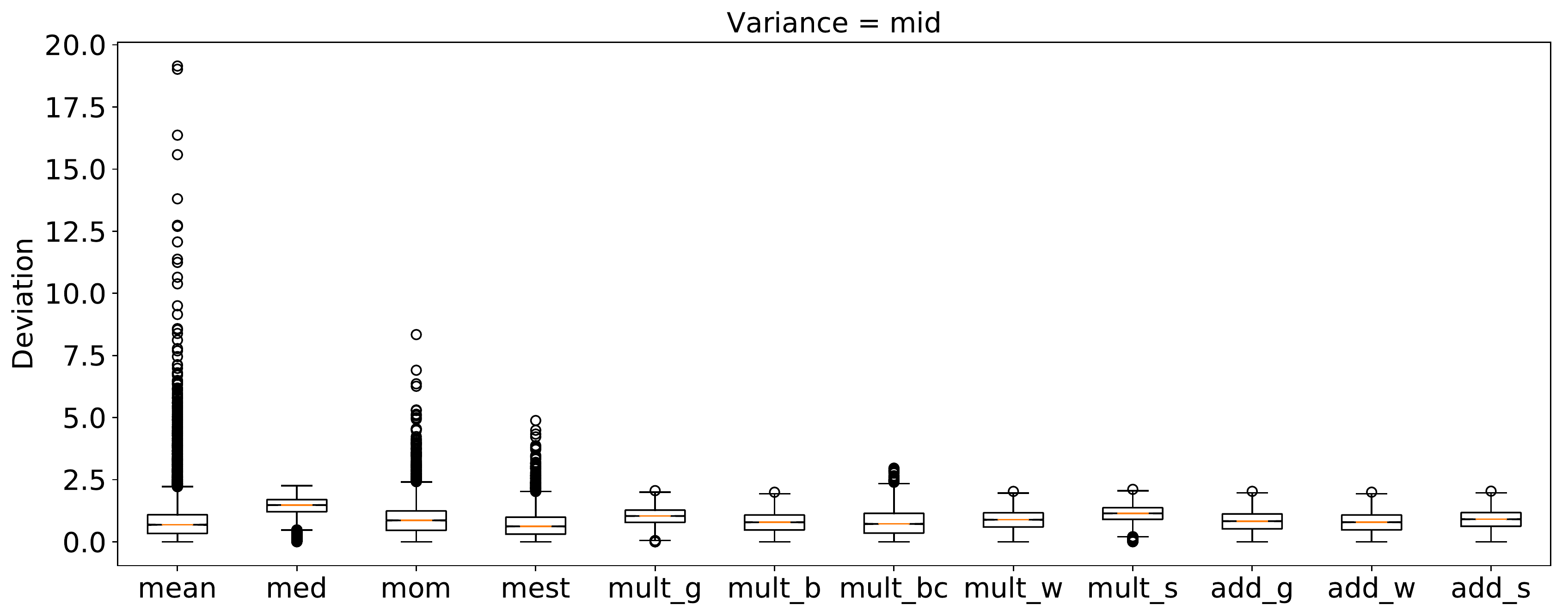}\\
\includegraphics[width=0.49\textwidth]{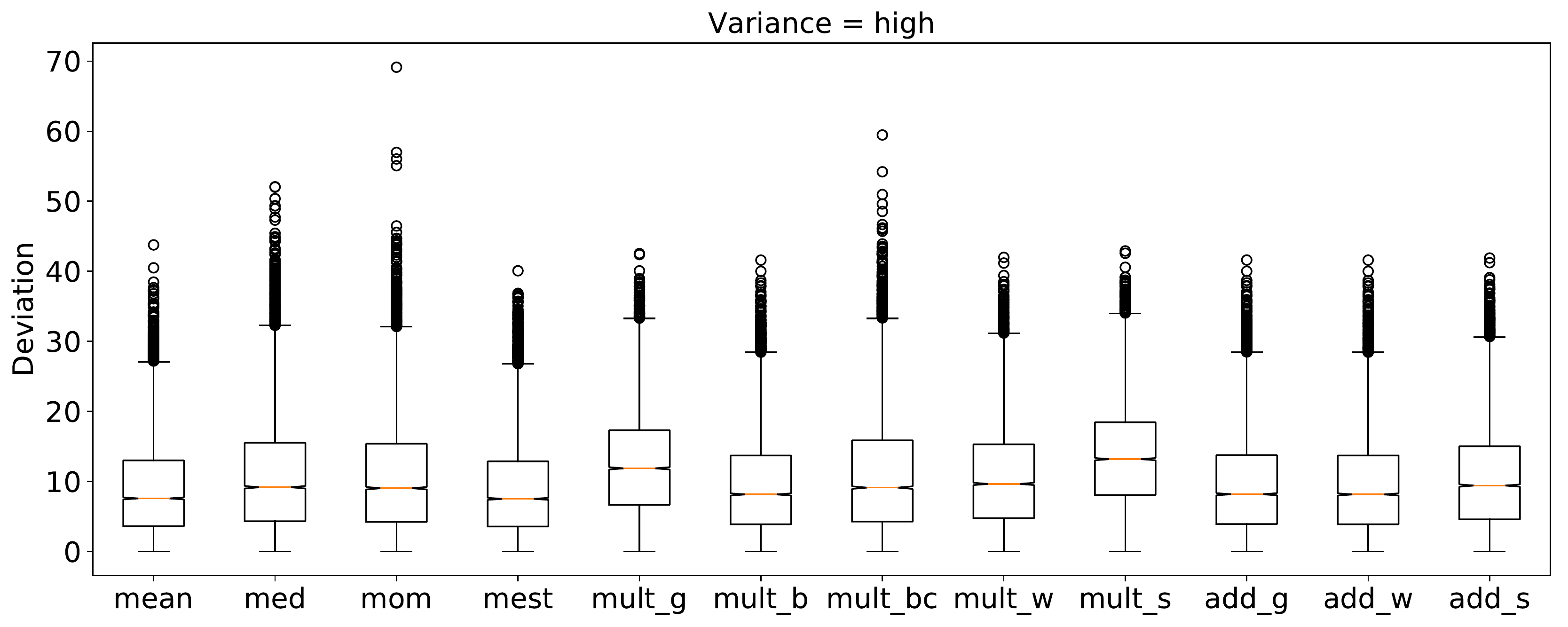}\includegraphics[width=0.49\textwidth]{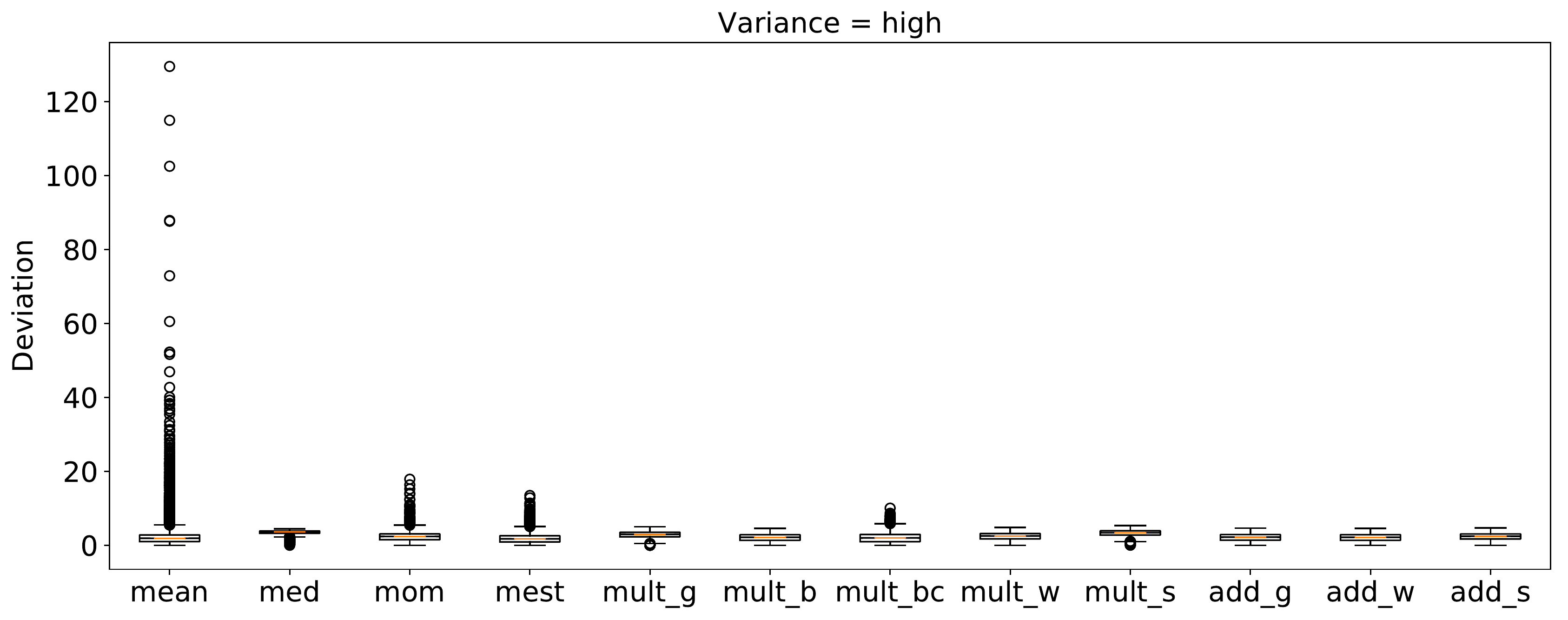}\\
\caption{Boxplots of deviations $|\xhat - \exx_{\ddist}X|$ over all trials. Sample size is $n=20$, mean to standard deviation ratio is $r(X)=1.0$. Left column: Normal data. Right column: log-Normal data. The rows correspond to low-high variance levels.}
\label{fig:box_deviations}
\end{figure}

In Figures \ref{fig:hist_deviations}--\ref{fig:box_deviations}, we look at the distribution of the absolute deviations observed over all independent trials. In the Normal case (left column of Figure \ref{fig:box_deviations}), the differences are subtle. All cases of $\xm$ and $\xa$ have deviations close to that of the sample mean; the bias towards zero is visible (noting $r(X)=1.0$, so $\exx X > 0$ here), but small, particularly in the case of the Bernoulli case of $\xm$, and all cases of $\xa$. On the other hand, in the log-Normal case, the strong bias of the median and the heavy-tailed deviations of the mean become salient, especially in the histograms of Figure \ref{fig:hist_deviations}. The concentration of various instances of $\xm$ and $\xa$ around non-zero deviation levels is indicative of the bias towards zero, though this trait appears much more clearly in the multiplicative Gaussian, Weibull, and Student-t cases than it does in any of the additive cases. This reinforces the observations made in section \ref{sec:empirical_meanSD}, where we looked at the average over all trials, rather than the entire distribution. Indeed, not only are the average levels similar, but the shape of the distribution of the \texttt{add\_*} methods is very close to that of \texttt{mult\_b}; this is lucid in the box plots for mid- to high-variance levels. On each histogram, we have indicated the maximum deviation value by a black dashed line, and for the non-classical estimators have also indicated the tightest known deviation bounds under just finite noise in matching color. While still somewhat loose, the $(1-2\delta)$ quantile for each distribution appears well below the derived bounds, providing a useful sanity check for both the new bounds and the computational formulas obtained in section \ref{sec:theory}. Among all the robust estimators, the centered estimator \texttt{mult\_bc} is clearly distinct, with an asymmetric distribution concentrated much closer to zero, indicating the smaller bias that we would expect due to the location correction being carried out (cf.~section \ref{sec:bydistro_bernoulli}).

\section{Conclusions and discussion}\label{sec:conclusions}

In this paper, we introduced and investigated a new class of mean estimators whose deviations enjoy exponential tails assuming only that the underlying data distribution has finite variance. These deviation bounds are sub-optimal in the sense that since the second moment controls the bounds, rather than variance, the guarantees will always be weaker than those of sub-Gaussian estimators as studied by \citet{devroye2016a}. What we lose in sharpness of guarantees, we gain in computational tractability, as the estimators introduced here can be computed in closed form depending on just distribution functions of known parametric families. Empirical tests illustrated the distributional robustness predicted by the theory, and the sensitivity to large values of the mean was shown to be easily mitigated using a simple sample-splitting strategy.

The function $\trunc$ used here is not particularly special in and of itself. Analogous theoretical results could be proved for estimators using a Huber-type influence function $\trunc$, by making a slight modification to the functions in (\ref{eqn:trunc_keyprop}) used to bound $\trunc$ from above and below. The form of computational results would naturally change, but the overall picture is the same as we have described above.

As mentioned in the introduction, mean estimators which perform well under weak assumptions on the underlying distribution plays an important role in developing machine learning algorithms with performance guarantees that hold for a wide variety of data. While the scaling strategy depends in the ideal case on partial information about the underlying distribution (known bounds on the second moments), in the case of M-estimators, cheap empirical estimates of the ideal scale have been shown to work well in practice \citep{holland2017a}. The most obvious direct application is to replace the core feedback mechanism is empirical risk minimization algorithms with a robustified objective, in the fashion of \citet{brownlees2015a}, which used Catoni-type M-estimators, which provide pointwise sub-Gaussian estimates of the risk, but yield an objective which is defined implicitly and challenging to minimize. Replacing this with the estimators discussed here could alleviate gaps between what can be achieved in practice and what is guaranteed in theory. Another natural application of interest is to gradient-based learning algorithms, which ignore the loss values and instead only try to approximate the gradient of the true risk function. Recent work in the machine learning community has looked at using median-of-means and M-estimators for constructing robust estimates of the risk gradient \citep{chen2017a,holland2019c}, but these iterative procedures can become expensive when the underlying dimension is high. In contrast, the estimators introduced here could replace existing estimators as a scalable strategy for robust learning algorithms in a high-dimensional setting. In particular, the Bernoulli-type estimator introduced in section \ref{sec:bydistro_bernoulli} is both theoretically and computationally appealing, and a natural candidate to accelerate recent robust gradient descent algorithms.

\appendix

\section{Technical appendix}\label{sec:tech}

\subsection{Proofs of results in main text}\label{sec:tech_proofs}

\begin{proof}[Proof of Proposition \ref{prop:bernoulli_all}]
The new form follows simply from direct computation:
\begin{align}
\nonumber
\xm & = \frac{s}{n} \sum_{i=1}^{n} \exx_{\epsilon_{i}} \trunc\left(\frac{X_{i}\epsilon_{i}}{s}\right)\\
\nonumber
& = \frac{s}{n} \sum_{i=1}^{n} \left( \theta\trunc\left(\frac{X_{i}}{s}\right) + (1-\theta)\trunc(0) \right)\\
\label{eqn:bernoulli_all_xhat_form}
& = \frac{\theta s}{n} \sum_{i=1}^{n} \trunc\left(\frac{X_{i}}{s}\right).
\end{align}
To obtain the deviation upper bound, we make use of the upper bound (\ref{eqn:xm_KL_bound}). The first term on the right-hand side of this bound is easily dealt with, observing
\begin{align*}
\int \left( \frac{\epsilon \exx_{\ddist}X}{s} + \frac{\epsilon^{2}\exx_{\ddist}X^{2}}{2s^{2}} \right) \, d\ndist(\epsilon) = \theta \left( \frac{\exx_{\ddist}X}{s} + \frac{\exx_{\ddist}X^{2}}{2s^{2}} \right)
\end{align*}
where we have used the fact that $\exx \epsilon^{2} = \exx \epsilon = \theta$ in the Bernoulli case. Furthermore, in the case of $\ndist_{\theta} = \text{Bernoulli}(\theta)$ and $\prior = \text{Bernoulli}(1/2)$, the relative entropy is computed as
\begin{align*}
\KL(\ndist_{\theta};\prior) = \theta \log(2\theta) + (1-\theta)\log(2(1-\theta)).
\end{align*}
Using these two equations to evaluate the right-hand side of (\ref{eqn:xm_KL_bound}), we obtain an upper bound on $\xm/\theta - \exx_{\ddist}X$, on an event of probability at least $1-\delta$ over the sample.

It remains to obtain a corresponding lower bound on $\xm/\theta - \exx_{\ddist}X$, or equivalently upper bounds on $(-1)\xm/\theta + \exx_{\ddist}X$. To do so, consider analogous settings of Bernoulli $\prior$ and $\ddist$, but this time on the domain $\{-1,0\}$, with $\ndist\{-1\}=\theta$ and $\prior\{-1\}=1/2$. Using (\ref{eqn:trunc_keyprop}) and Lemma \ref{lem:cat17type_KLbound} again, we have
\begin{align*}
\left(\frac{-\theta}{s}\right)\xhat & \leq \frac{-\theta \exx_{\ddist}X}{s} + \frac{\theta \exx_{\ddist}X^{2}}{2s^{2}} + \frac{1}{n} \left(\theta \log(2\theta) + (1-\theta)\log(2(1-\theta)) + \log(\delta^{-1})\right)
\end{align*}
where we note $\exx_{\ddist}\epsilon = -\theta$ and  $\exx_{\ddist}\epsilon^{2} = \exx_{\ddist}|\epsilon| = \theta$. This yields a high-probability lower bound in the desired form. Note however that since we have changed the prior distribution from the case where we proved the upper bound, we must take a union over the two events, yielding high-probability two-sided bounds on a $1-2\delta$ event.
\end{proof}

\begin{proof}[Proof of Proposition \ref{prop:normal_devbd}]
Starting with the additive case, evaluating the first term in the right-hand side of (\ref{eqn:xa_KL_bound}) and using the $\exx_{\ndist}\epsilon = 0$ assumption, we have that
\begin{align*}
\int \left( \frac{\exx_{\ddist}X + \epsilon}{s} + \frac{\exx_{\ddist}(X+\epsilon)^{2}}{2s^{2}} \right) \, d\ndist(\epsilon) & = \frac{\exx_{\ddist}X}{s} + \frac{1}{2s^{2}}\left(\exx_{\ndist}\epsilon^{2} + \exx_{\ddist}X^{2} \right)\\
& = \frac{\exx_{\ddist}X}{s} + \frac{1}{2\beta s^{2}} + \frac{\exx_{\ddist}X^{2}}{2s^{2}}.
\end{align*}
Using the fact that for $\ndist = \text{Normal}(0,\beta^{-1})$ and $\prior = \text{Normal}(1,\beta^{-1})$, the relative entropy takes the form $\KL(\ndist;\prior) = \beta/2$, using Lemma \ref{lem:cat17type_KLbound} we thus obtain an upper bound
\begin{align*}
\xa - \exx_{\ddist}X \leq \frac{1}{2\beta s} + \frac{\exx_{\ddist}X^{2}}{2s} + \frac{s}{n}\left(\frac{\beta}{2} + \log(\delta^{-1})\right)
\end{align*}
with at least probability $1-\delta$ in the draw of the data sample. Optimizing this bound in terms of $\beta > 0$ yields
\begin{align*}
\beta^{2} = \frac{n}{s^{2}},
\end{align*}
and with respect to $s>0$ yields
\begin{align*}
s^{2} = n \left(\frac{1}{2\beta} + \frac{\exx_{\ddist}X^{2}}{2}\right)\left(\frac{\beta}{2} + \log(\delta^{-1})\right)^{-1}.
\end{align*}
Plugging the former into the latter, we obtain a scale setting of
\begin{align*}
s^{2} = \frac{n\exx_{\ddist}X^{2}}{2\log(\delta^{-1})}.
\end{align*}
Finally plugging this into the upper bound we obtain
\begin{align*}
\xa - \exx_{\ddist}X \leq \sqrt{\frac{2\exx_{\ddist}X^{2}\log(\delta^{-1})}{n}} + \frac{1}{\sqrt{n}}
\end{align*}
on the same high-probability good event. To obtain a lower bound on $\xa - \exx_{\ddist}X$, equivalently an upper bound on $-\xa + \exx_{\ddist}X$, is straightforward. First, note that $-\trunc(u) \leq \log(1-u+u^{2}/2)$ via (\ref{eqn:trunc_keyprop}). From Lemma \ref{lem:cat17type_KLbound}, setting $f(x,\epsilon) = -\trunc((x+\epsilon)/2)$, we obtain bounds of the form
\begin{align*}
-\xa \leq -\exx_{\ddist}X + \frac{1}{2\beta s} + \frac{\exx_{\ddist}X^{2}}{2s} + \frac{s}{n}\left(\frac{\beta}{2} + \log(\delta^{-1})\right)
\end{align*}
which can then be optimized in $\beta$ and $s$ just as above, yielding a bound
\begin{align*}
-\xa+\exx_{\ddist}X \leq \sqrt{\frac{2\exx_{\ddist}X^{2}\log(\delta^{-1})}{n}} + \frac{1}{\sqrt{n}}
\end{align*}
on an event of probability at least $1-\delta$. Unfortunately, since we have modified our choice of $f$ used in Lemma \ref{lem:cat17type_KLbound}, the two $1-\delta$ events considered above need not coincide, and so using a union bound, we have two-sided bounds with probability at least $1-2\delta$ of the form
\begin{align*}
|\xa-\exx_{\ddist}X| \leq \sqrt{\frac{2\exx_{\ddist}X^{2}\log(\delta^{-1})}{n}} + \frac{1}{\sqrt{n}},
\end{align*}
which is the desired result for $\xa$.

For the multiplicative case, the proof is essentially analogous, except using equation (\ref{eqn:xm_KL_bound}) instead of (\ref{eqn:xa_KL_bound}), and we can obtain two-sided bounds without taking a union bound. A full proof appears in \citet{holland2019a}.
\end{proof}

\begin{proof}[Proof of Proposition \ref{prop:weibull_comp}]
When $u \leq 0$, since the event $\{W \leq 0\}$ has zero measure, by monotonicity we have $\exx W^{k} I\{W \leq u\} = 0$. Thus, it remains to consider the case where $u > 0$, which is assumed in all the following computations.

Using integration by parts, we have
\begin{align}\label{eqn:weibull_comp_W1prep}
\exx W I\{ W \leq u \} = \int_{0}^{u} \exp\left(-\left(\frac{t}{\sigma}\right)^{k}\right) \, dt - u \exp\left(-\left(\frac{u}{\sigma}\right)^{k}\right).
\end{align}
Examining the first term on the right-hand side of (\ref{eqn:weibull_comp_W1prep}), using substitution we have
\begin{align*}
\int_{0}^{u} \exp\left(-\left(\frac{t}{\sigma}\right)^{k}\right) \, dt & = \int_{0}^{u^{\prime}} e^{-t} \left(\frac{\sigma}{k}\right) t^{\frac{1}{k}-1} \, dt\\
& = \frac{\sigma}{k} \Gamma\left(\frac{1}{k};\left(\frac{u}{\sigma}\right)^{k}\right)
\end{align*}
where $u^{\prime} = (u/\sigma)^{k}$, and $\Gamma(u;v)$ is the unnormalized partial Gamma function. It follows that
\begin{align*}
\exx W I\{ W \leq u \} = \frac{\sigma}{k} \Gamma\left(\frac{1}{k};\left(\frac{u}{\sigma}\right)^{k}\right) - u \exp\left(-\left(\frac{u}{\sigma}\right)^{k}\right).
\end{align*}

Next, again by parts, we have
\begin{align}\label{eqn:weibull_comp_W2prep}
\exx W^{2} I\{ W \leq u \} = 2 \int_{0}^{u} t \exp\left(-\left(\frac{t}{\sigma}\right)^{k}\right) \, dt - u^{2} \exp\left(-\left(\frac{u}{\sigma}\right)^{k}\right).
\end{align}
To evaluate the first term on the right-hand side of (\ref{eqn:weibull_comp_W2prep}), again we use substitution, yielding
\begin{align*}
\int_{0}^{u} t \exp\left(-\left(\frac{t}{\sigma}\right)^{k}\right) \, dt & = \int_{0}^{u^{\prime}} \left(\sigma t^{\frac{1}{k}}\right) e^{-t} \left(\frac{\sigma}{k}\right) t^{\frac{1}{k}-1} \, dt\\
& = \frac{\sigma^{2}}{k} \Gamma\left(\frac{2}{k}; \left(\frac{u}{\sigma}\right)^{k}\right)
\end{align*}
It thus follows that
\begin{align*}
\exx W^{2} I\{ W \leq u \} = \frac{2\sigma^{2}}{k} \Gamma\left(\frac{2}{k}; \left(\frac{u}{\sigma}\right)^{k}\right) - u^{2} \exp\left(-\left(\frac{u}{\sigma}\right)^{k}\right).
\end{align*}

Finally, again by parts, we have
\begin{align}\label{eqn:weibull_comp_W3prep}
\exx W^{3} I\{ W \leq u \} = 3 \int_{0}^{u} t^{2} \exp\left(-\left(\frac{t}{\sigma}\right)^{k}\right) \, dt - u^{3} \exp\left(-\left(\frac{u}{\sigma}\right)^{k}\right).
\end{align}
To evaluate the first term on the right-hand side of (\ref{eqn:weibull_comp_W3prep}), again using substitution, we have
\begin{align*}
\int_{0}^{u} t^{2} \exp\left(-\left(\frac{t}{\sigma}\right)^{k}\right) \, dt & = \int_{0}^{u^{\prime}} \left(\sigma t^{\frac{1}{k}}\right)^{2} e^{-t} \frac{\sigma}{k} t^{\frac{1}{k}-1} \, dt\\
& = \frac{\sigma^{3}}{k} \Gamma\left(\frac{3}{k};\left(\frac{u}{\sigma}\right)^{k}\right).
\end{align*}
It follows that
\begin{align*}
\exx W^{3} I\{ W \leq u \} = 3 \frac{\sigma^{3}}{k} \Gamma\left(\frac{3}{k};\left(\frac{u}{\sigma}\right)^{k}\right) - u^{3} \exp\left(-\left(\frac{u}{\sigma}\right)^{k}\right).
\end{align*}
With the above forms in place, the desired result follows immediately.
\end{proof}

\begin{proof}[Proof of Proposition \ref{prop:weibull_devbd}]
We have $W \sim \text{Weibull}(k,\sigma)$. Recall that the mean and variance for a general Weibull distribution have the forms
\begin{align*}
\exx W & = \sigma \Gamma(1+1/k)\\
\vaa W & = \sigma^{2}\left(\Gamma(1+2/k)-\Gamma^{2}(1+1/k)\right).
\end{align*}

We start with the additive case. In this setting, since $k=2$ by assumption and thus $\exx W = \sigma\Gamma(3/2)$, we have centered noise $\exx_{\ndist}\epsilon = 0$ as desired; note also that $\Gamma(3/2) = \sqrt{\pi}/2$. First of all, because of this centering, we have $\exx_{\ndist} \epsilon^{2} = \vaa W = \sigma^{2}(1-\Gamma^{2}(3/2))$. In arguments analogous to Propositions \ref{prop:bernoulli_all} and \ref{prop:normal_devbd}, on the high-probability event we have
\begin{align*}
\xa - \exx_{\ddist}X \leq \frac{\exx_{\ddist}X^{2}+\sigma^{2}(1-\pi/4)}{2s} + \frac{s}{n}\left(\KL(\ndist;\prior)+\log(\delta^{-1})\right)
\end{align*}
and lower bounds follow just as in the proof of Proposition \ref{prop:normal_devbd}, yielding two-sided bounds on an event of $1-2\delta$ after taking a union bound. The relative entropy vanishes by assumption, and optimizing this bound with respect to $s>0$ yields the desired result for $\xa$.

Next we handle the multiplicative case. Setting $\ndist = \text{Weibull}(k,\sigma(k))$ with $\sigma(k) = (\Gamma(1+1/k))^{-1}$ ensures that $\exx_{\ndist}\epsilon = 1$. Using (\ref{eqn:xm_KL_bound}), we thus have
\begin{align*}
\xm - \exx_{\ddist}X \leq \frac{\Gamma(1+2/k)\exx_{\ddist}X^{2}}{\Gamma^{2}(1+1/k)\,2s} + \frac{s}{n}\left(\KL(\ndist;\prior)+\log(\delta^{-1})\right)
\end{align*}
on the high-probability event. Since the relative entropy between two Weibull distributions in the general case takes the form given by (\ref{eqn:weibull_KL}), this yields
\begin{align*}
\KL(\ndist;\prior) = \frac{1}{\Gamma^{k}(1+1/k)} + k\log\Gamma(1+1/k) - 1.
\end{align*}
Plugging in this value to the upper bound and optimizing with respect to $s>0$ yields the desired result. Lower bounds here work in the same way as the additive case shown above.
\end{proof}

\begin{proof}[Proof of Proposition \ref{prop:student_comp}]
To keep the computations from getting too cluttered, we establish some notation before starting with the proof. For positive integer $\dof$, write the normalization constant of the $\text{Student}(\dof)$ distribution as
\begin{align*}
A_{\dof} & \defeq \frac{\Gamma((\dof+1)/2)}{\sqrt{\dof\pi}\,\Gamma(\dof/2)}.
\end{align*}

To begin, the first quantity to be evaluated is
\begin{align*}
\exx W I\{W \leq u\} = \int_{-\infty}^{u} t \, A_{\dof} \left(1 + \frac{t^{2}}{\dof}\right)^{-(\dof+1)/2} \, dt.
\end{align*}
In seeking an anti-derivative, first observe that
\begin{align}\label{eqn:student_comp_E1_start}
\frac{d}{dt} \left(1+\frac{t^{2}}{\dof}\right)^{-(\dof+1)/2} = (-1)\frac{(\dof+1)}{\dof} t \left(1 + \frac{t^{2}}{\dof}\right)^{-(\dof+3)/2}.
\end{align}
Integrating the left-hand side of (\ref{eqn:student_comp_E1_start}) can be done using the fundamental theorem of calculus, and the right-hand side  almost has the desired form, save for the $\dof+3$ in the exponent, which does not match the form of the Student density. This can be easily dealt with using a substitution argument. With transformation $g(u) \defeq u \sqrt{\dof/(\dof+2)}$, by substitution we have
\begin{align}
\nonumber
\int_{-\infty}^{g(u)} (-1)\frac{(\dof+1)}{\dof} t \left(1 + \frac{t^{2}}{\dof}\right)^{-(\dof+3)/2} \, dt & = \int_{-\infty}^{u} (-1)\frac{(\dof+1)}{\dof} g(t) \left(1 + \frac{g(t)^{2}}{\dof}\right)^{-(\dof+3)/2} g^{\prime}(t) \, dt\\
\nonumber
& = \int_{-\infty}^{u} (-1)\frac{(\dof+1)}{(\dof+2)} t \left(1 + \frac{t^{2}}{\dof+2}\right)^{-(\dof+3)/2} \, dt\\
\label{eqn:student_comp_E1_sub}
& \defeq \int_{-\infty}^{u} t \, \widetilde{p}^{(1)}_{\dof+2}(t) \, dt.
\end{align}
Denoting by $p_{\dof}$ by the $\text{Student}(\dof)$ density function, note that with rescaling we have
\begin{align*}
(-1)\frac{(\dof+2)}{(\dof+1)} A_{\dof+2} \, \widetilde{p}^{(1)}_{\dof+2}(t) = p_{\dof+2}(t), \quad t \in \RR.
\end{align*}
It thus immediately follows that for $\dof > 1$, using (\ref{eqn:student_comp_E1_start}) and (\ref{eqn:student_comp_E1_sub}) we have
\begin{align}
\nonumber
\exx_{\dof+2} W I\{W \leq u \} & = (-1)\frac{(\dof+2)}{(\dof+1)} A_{\dof+2} \int_{-\infty}^{u} t \, \widetilde{p}^{(1)}_{\dof+2}(t) \, dt\\
\nonumber
& = (-1)\frac{(\dof+2)}{(\dof+1)} A_{\dof+2} \left[ \left(1 + \frac{t^{2}}{\dof}\right)^{-(\dof+1)/2} \right]^{g(u)}_{-\infty}\\
\label{eqn:student_comp_E1_final}
& = (-1)\frac{(\dof+2)}{(\dof+1)} A_{\dof+2} \left(1 + \frac{u^{2}}{\dof+2}\right)^{-(\dof+1)/2}.
\end{align}
Thus, for any $\dof > 3$, we can compute $\exx_{\dof} W I\{W \leq u\}$ by the formula given by (\ref{eqn:student_comp_E1_final}).

The second quantity of interest is
\begin{align*}
\exx W^{2} I\{ W \leq u \} = \int_{-\infty}^{u} t^{2} \, A_{\dof} \left(1 + \frac{t^{2}}{\dof+2}\right)^{-(\dof+1)/2} \, dt.
\end{align*}
In a similar fashion to the first-degree case, here observe that
\begin{align}\label{eqn:student_comp_E2_start}
\frac{d}{dt} \, t \left(1 + \frac{t^{2}}{\dof}\right)^{-(\dof+1)/2} = \left(1 + \frac{t^{2}}{\dof}\right)^{-(\dof+1)/2} - \frac{\dof+1}{\dof} t^{2} \left(1 + \frac{t^{2}}{\dof}\right)^{-(\dof+3)/2}.
\end{align}
Using the fundamental theorem of calculus again, along with the Student CDF, we can thus readily solve for the integral of the second term on the right-hand side. To relate this term to the desired quantity, once again we use substitution, with transformation $g(u) = u \sqrt{\dof/(\dof+2)}$, to obtain
\begin{align}
\nonumber
\int_{-\infty}^{g(u)} \frac{(\dof+1)}{\dof} \, t^{2} \left(1 + \frac{t^{2}}{\dof}\right)^{-(\dof+3)/2} \, dt & = \int_{-\infty}^{u} \frac{(\dof+1)}{\dof} \, g(t)^{2} \left(1 + \frac{g(t)^{2}}{\dof}\right)^{-(\dof+3)/2} g^{\prime}(t) \, dt\\
\nonumber
& = \int_{-\infty}^{u} \frac{\sqrt{\dof}(\dof+1)}{(\dof+2)^{3/2}} \, t^{2} \left(1 + \frac{t^{2}}{\dof+2}\right)^{-(\dof+3)/2} \, dt\\
\label{eqn:student_comp_E2_sub}
& \defeq \int_{-\infty}^{u} t^{2} \widetilde{p}^{(2)}_{\dof+2}(t) \, dt.
\end{align}
With proper rescaling, we have
\begin{align*}
\frac{(\dof+2)^{3/2}}{\sqrt{\dof}(\dof+1)} \, A_{\dof+2} \,  \widetilde{p}^{(2)}_{\dof+2}(t) = p_{\dof+2}(t), \quad t \in \RR.
\end{align*}
Thus using (\ref{eqn:student_comp_E2_start}) and (\ref{eqn:student_comp_E2_sub}), we can conclude that for any $\dof > 2$, we have
\begin{align}
\nonumber
\exx_{\dof+2} W^{2} I\{W \leq u \} & = \frac{(\dof+2)^{3/2}}{\sqrt{\dof}(\dof+1)} \, A_{\dof+2} \int_{-\infty}^{u} t^{2} \widetilde{p}^{(2)}_{\dof+2}(t) \, dt\\
\nonumber
& \hspace{-2.5cm} = \frac{(\dof+2)^{3/2}}{\sqrt{\dof}(\dof+1)} \, A_{\dof+2} \left( \int_{-\infty}^{g(u)}\left(1+\frac{t^{2}}{\dof}\right)^{-(\dof+1)/2}\,dt - \left[ t \left(1 + \frac{t^{2}}{\dof}\right)^{-(\dof+1)/2} \right]^{g(u)}_{-\infty} \right)\\
\label{eqn:student_comp_E2_final}
& \hspace{-2.5cm} = \frac{(\dof+2)^{3/2}}{\sqrt{\dof}(\dof+1)} \, \frac{A_{\dof+2}}{A_{\dof}} \exx_{\dof} I\left\{ W^{\prime} \leq u \sqrt{\frac{\dof}{\dof+2}} \right\} - A_{\dof+2} \frac{(\dof+2)}{(\dof+1)} \, u \left(1+\frac{u^{2}}{\dof+2}\right)^{-(\dof+1)/2}
\end{align}
where $W^{\prime}$ is an independent copy of $W$, here with distribution $\text{Student}(\dof)$. For any $\dof > 4$ then, using the formula given by (\ref{eqn:student_comp_E2_final}).

The final quantity of interest is handled in an analogous way. We seek
\begin{align*}
\exx W^{3} I\{ W \leq u \} = \int_{-\infty}^{u} t^{3} \, A_{\dof} \left(1 + \frac{t^{2}}{\dof+2}\right)^{-(\dof+1)/2} \, dt
\end{align*}
and to start us off compute
\begin{align}\label{eqn:student_comp_E3_start}
\frac{d}{dt} \, t^{2} \left(1 + \frac{t^{2}}{\dof}\right)^{-(\dof+1)/2} = 2t \left(1+\frac{t^{2}}{\dof}\right)^{-(\dof+1)/2} - \frac{(\dof+1)}{\dof} t^{3} \left(1+\frac{t^{2}}{\dof}\right)^{-(\dof+3)/2}
\end{align}
and to relate the last term to the desired quantity, using substitution with transformation $g(u) = u \sqrt{\dof/(\dof+2)}$, we obtain
\begin{align}
\nonumber
\int_{-\infty}^{g(u)} \frac{(\dof+1)}{\dof} t^{3} \left(1+\frac{t^{2}}{\dof}\right)^{-(\dof+3)/2} \, dt & = \int_{-\infty}^{u} \frac{(\dof+1)}{\dof} g(t)^{3} \left(1+\frac{g(t)^{2}}{\dof}\right)^{-(\dof+3)/2} g^{\prime}(t) \, dt\\
\nonumber
& = \int_{-\infty}^{u} \frac{\dof(\dof+1)}{(\dof+2)^{2}} t^{3} \left(1+\frac{t^{2}}{\dof+2}\right)^{-(\dof+3)/2} \, dt\\
\label{eqn:student_comp_E3_sub}
& \defeq \int_{-\infty}^{u} t^{3} \widetilde{p}^{(3)}_{\dof+2}(t) \, dt.
\end{align}
Rescaling to obtain the desired density, we have
\begin{align*}
\frac{(\dof+2)^{2}}{\dof(\dof+1)} A_{\dof+2} \, \widetilde{p}^{(3)}_{\dof+2}(t) = p_{q+2}(t), \quad t \in \RR.
\end{align*}
With this in hand, using (\ref{eqn:student_comp_E3_start}) and (\ref{eqn:student_comp_E3_sub}), it follows for $\dof > 3$ that
\begin{align}
\nonumber
\exx_{\dof+2} W^{3} I\{W \leq u\} & = \frac{(\dof+2)^{2}}{\dof(\dof+1)} A_{\dof+2} \int_{-\infty}^{u} t^{3} \widetilde{p}^{(3)}_{\dof+2}(t) \, dt\\
\nonumber
& \hspace{-2.5cm} = \frac{(\dof+2)^{2}}{\dof(\dof+1)} A_{\dof+2} \left( \frac{2}{A_{\dof}} \exx_{\dof} W^{\prime} I\left\{ W^{\prime} \leq g(u) \right\} - \left[ t^{2} \left(1+\frac{t^{2}}{\dof}\right)^{-(\dof+1)/2} \right]^{g(u)}_{-\infty} \right)\\
\label{eqn:student_comp_E3_final}
& \hspace{-2.5cm} = \frac{2(\dof+2)^{2}}{\dof(\dof+1)} \frac{A_{\dof+2}}{A_{\dof}} \exx_{\dof} W^{\prime} I\left\{ W^{\prime} \leq u \sqrt{\frac{\dof}{\dof+2}} \right\} - \frac{(\dof+2)}{(\dof+1)} A_{\dof+2} \, u^{2} \left(1+\frac{u^{2}}{\dof+2}\right)^{-(\dof+1)/2}
\end{align}
where again $W^{\prime}$ is an independent copy of $W$ with distribution $\text{Student}(\dof)$. We may thus conclude that for any $\dof > 5$, using the formula given by (\ref{eqn:student_comp_E3_final}), we are able to compute the desired $\exx_{\dof} W^{3} I\{ W \leq u\}$.
\end{proof}

\begin{proof}[Proof of Proposition \ref{prop:student_devbd}]
Write $W \sim \text{Student}(\dof)$ for $\dof>1$, with shift parameter $\alpha$ as specified in the hypothesis. By symmetry we have that $\exx W = 0$. Assuming $\dof>2$, the second moment is $\exx W^{2} = \dof/(\dof-2)$. To obtain deviation bounds for $\xm$ and $\xa$, once again using (\ref{eqn:xm_KL_bound}) and (\ref{eqn:xa_KL_bound}) as in previous proofs, it follows that each of the following inequalities holds on an event of probability at least $1-\delta$:
\begin{align*}
\xm & \leq \alpha\,\exx_{\ddist}X + \frac{(\alpha^{2}(\dof-2)+\dof)\exx_{\ddist}X^{2}}{(\dof-2)2s} + \frac{s}{n}\left(\KL(\ndist;\prior)+\log(\delta^{-1})\right)\\
\xa & \leq \exx_{\ddist}X + \frac{\exx_{\ddist}X^{2}+\dof/(\dof-2)}{2s} + \frac{s}{n}\left(\KL(\ndist;\prior)+\log(\delta^{-1})\right).
\end{align*}
To deal with the relative entropy, we obtain an upper bound on it as follows. To start, consider the additive case, where we have a prior of $\text{Student}(\dof)$ shifted by $\alpha \in \RR$, and thus has a density function of $p_{\alpha}(u) = p(u+\alpha)$, where $p$ is the $\text{Student}(\dof)$ density (\ref{eqn:student_density}). Normalizing constants cancel, and the relative entropy takes the form
\begin{align*}
\KL(\ndist;\prior) & = \frac{(\dof+1)}{2}\int_{-\infty}^{\infty} \left( \log\left(1+\frac{(u+\alpha)^{2}}{\dof}\right) - \log\left(1+\frac{u^{2}}{\dof}\right) \right) p(u) \, du.
\end{align*}
Directly evaluating this function is challenging, but a simple upper bound will suit our needs. The first term can be controlled as follows. Using the sub-additivity of the concave function $f(u)=\log(1+u)$, we have
\begin{align*}
\exx \log\left(1+\frac{(W+\alpha)^{2}}{\dof}\right) & = \exx\log\left(1+\frac{W^{2}+\alpha^{2}+2\alpha W}{\dof}\right)\\
& \leq \exx\log\left(1+\frac{W^{2}+\alpha^{2}+2|\alpha W|}{\dof}\right)\\
& \leq \exx\log\left(1+\frac{W^{2}}{\dof}\right) + \log\left(1+\frac{\alpha^{2}}{\dof}\right) + \exx\log\left(1+\frac{2|\alpha W|}{\dof}\right).
\end{align*}
The first term here can be evaluated directly in the form
\begin{align}\label{eqn:student_devbd_KL_tsquared}
\exx\log\left(1+\frac{W^{2}}{\dof}\right) = \Psi\left(\frac{\dof+1}{2}\right) - \Psi\left(\frac{\dof}{2}\right),
\end{align}
where $\Psi(u) = (d/du)\log\Gamma(u)$ is the digamma function. This is a well-known classical fact. To see that it is valid, first recall that a squared Student-t random variable with $\dof$ degrees of freedom has the same distribution as the ratio of two independent chi-squared random variables \citep{stuart1994KendallVol1}, namely
\begin{align*}
W^{2} = \frac{\chi^{2}_{1}}{(\chi^{2}_{\dof}/\dof)}
\end{align*}
where equality here refers to equality in distribution. Equality in distribution implies equality in mean, and thus
\begin{align*}
\exx\log\left(1+\frac{W^{2}}{\dof}\right) & = \exx\log\left(1+\frac{\chi^{2}_{1}}{\chi^{2}_{\dof}}\right)\\
& = \exx\log\left(\frac{\chi^{2}_{\dof}+\chi^{2}_{1}}{\chi^{2}_{\dof}}\right)\\
& = \exx\log\left(\chi^{2}_{\dof+1}\right)-\exx\log\left(\chi^{2}_{\dof}\right)
\end{align*}
where we use the fact that Chi-squared random variables with $m$ degrees of freedom are equivalent to the sum of $m$ squared Normal random variables. Another important relation is that $\chi^{2}_{\dof} \sim \text{Gamma}(\dof/2,1/2)$, namely Chi-squared has the same distribution as a Gamma random variable with shape $\dof/2$ and rate $1/2$. This is important because log-transformations of the Gamma distribution are well-understood. In particular, if $G_{\beta} \sim \text{Gamma}(\kappa,\beta)$ for arbitrary shape $\kappa$, then in the special case of $\beta=1$, we have
\begin{align*}
\exx \log(G_{1}) = \left.\frac{d}{du}\log\Gamma(u)\right|_{u=\kappa} = \Psi(\kappa),
\end{align*}
a fact that dates back to at least \citet{johnson1949a} (see also \citet[Ch.~6 exercises]{stuart1994KendallVol1}). Using integration by substitution, we have that
\begin{align*}
\exx\log(G_{1/2}) = \log(2) + \exx\log(G_{1}),
\end{align*}
which means that using the relation of Chi-squared to Gamma, we have
\begin{align*}
\exx\log\left(1+\frac{W^{2}}{\dof}\right) & = \exx\log\left(\chi^{2}_{\dof+1}\right)-\exx\log\left(\chi^{2}_{\dof}\right)\\
& = \left(\Psi\left(\frac{\dof+1}{2}\right) + \log(2)\right) - \left(\Psi\left(\frac{\dof}{2}\right) + \log(2)\right)\\
& = \Psi\left(\frac{\dof+1}{2}\right)-\Psi\left(\frac{\dof}{2}\right)
\end{align*}
which is the desired form (\ref{eqn:student_devbd_KL_tsquared}). The remaining term to control is $\exx\log\left(1+2|\alpha W|/\dof\right)$. Using the inequality $\log(1+u) \leq u$, we have that
\begin{align*}
\exx\log\left(1+\frac{2|\alpha W|}{\dof}\right) \leq \frac{2|\alpha| \exx|W|}{\dof} = \frac{4|\alpha|\Gamma((\dof+1)/2)}{\sqrt{\dof\pi}\Gamma(\dof/2)(\dof-1)}
\end{align*}
which follows from a straightforward derivation of the mean of a folded Student-t. We thus have a straightforward upper bound on the relative entropy, taking the form
\begin{align*}
\KL(\ndist;\prior) & = \frac{(\dof+1)}{2}\left( \exx\log\left(1+\frac{(W+\alpha)^{2}}{\dof}\right) - \exx\log\left(1+\frac{W^{2}}{\dof}\right) \right)\\
& \leq \frac{(\dof+1)}{2}\left[\left( \Psi_{1}-\Psi_{0} + \log\left(1+\frac{\alpha^{2}}{\dof}\right) + \frac{4|\alpha|\Gamma((\dof+1)/2)}{\sqrt{\dof\pi}\,\Gamma(\dof/2)(\dof-1)} \right) - \left( \Psi_{1}-\Psi_{0} \right)\right]\\
& = \frac{(\dof+1)}{2}\left( \log\left(1+\frac{\alpha^{2}}{\dof}\right) + \frac{4|\alpha|\Gamma((\dof+1)/2)}{\sqrt{\dof\pi}\,\Gamma(\dof/2)(\dof-1)} \right)
\end{align*}
where for readability, we have used the notation
\begin{align*}
\Psi_{1} \defeq \Psi\left(\frac{\dof+1}{2}\right), \quad \Psi_{0} \defeq \Psi\left(\frac{\dof}{2}\right).
\end{align*}
This gives us an upper bound on the relative entropy in the additive case, where we have $\prior = \text{Student}(\dof)-\alpha, \ndist = \text{Student}(\dof)$. For the multiplicative case, where we assume $\prior = \text{Student}(\dof), \ndist = \text{Student}(\dof)+\alpha$, the exact same bound on the relative entropy holds. To see this, note that in the latter setting, using substitution, we have
\begin{align*}
\KL(\ndist;\prior) & = \frac{(q+1)}{2} \int_{-\infty}^{\infty} \left(\log\left(1+\frac{u^{2}}{\dof}\right) - \log\left(1+\frac{(u-\alpha)^{2}}{\dof}\right)\right) p(u-\alpha) \, du\\
& = \frac{(q+1)}{2} \int_{-\infty}^{\infty} \left(\log\left(1+\frac{(u+\alpha)^{2}}{\dof}\right) - \log\left(1+\frac{u^{2}}{\dof}\right)\right) p(u) \, du
\end{align*}
which is precisely the quantity we just bounded above. Plugging in this value as an upper bound for the relative entropy, and optimizing this upper bound with respect to $s$ yields the desired results. Lower bounds on $\xa-\exx_{\ddist}X$ and $\xm-\exx_{\ddist}X$ are obtained using an analogous argument to that seen in Propositions \ref{prop:normal_devbd} and \ref{prop:weibull_devbd}, yielding the desired two-sided deviation bounds with probability at least $1-2\delta$.
\end{proof}

{\small
\bibliographystyle{apalike}
\bibliography{1dim.bib}
}

\end{document}